\definecolor{awesome}{rgb}{1.0, 0.13, 0.32}
\definecolor{bronze}{rgb}{0.8, 0.5, 0.2}
\definecolor{burntorange}{rgb}{0.8, 0.33, 0.0}
\definecolor{blue(ncs)}{rgb}{0.0, 0.53, 0.74}
\theoremstyle{plain}
\newtheorem{theorem}{Theorem}[section]
\newtheorem{lemma}[theorem]{Lemma}
\theoremstyle{remark}
\newtheorem{remark}[theorem]{Remark}
\theoremstyle{definition}
\newtheorem{definition}[theorem]{Definition}
\newtheorem{assumption}{Assumption}
\numberwithin{equation}{section} 
\begin{document}

\title{\bf  Critical Transitions in Interacting Particle Systems: An Onsager–Machlup Action Functional Framework}






\author{\bf\normalsize{
Jianyu Chen$^{1,}$\footnotemark[2],
Ting Gao $^{1,}$\footnotemark[1],
Galina Strelkova $^{2,}$\footnotemark[3]
Jinqiao Duan$^{1,3,4,}$\footnotemark[4]
}\\[10pt]
\footnotesize{$^1$ School of Mathematics and Statistics, Huazhong University of Science and Technology, Wuhan 430074, China.} \\[5pt]
\footnotesize{$^2$ Department of Nonlinear Dynamics and Electromagnetism, Saratov State University, Russia.}\\[5pt]
\footnotesize{$^3$ Department of Mathematics and Department of Physics, Great Bay University, Dongguan, Guangdong 523000, China.}\\[5pt]
\footnotesize{$^4$ Dongguan Key Laboratory for Data Science and Intelligent Medicine, Dongguan, Guangdong 523000, China.} 
}

\footnotetext[2]{Email: \texttt{jianyuchen@hust.edu.cn}}
\footnotetext[1]{Email: 
\texttt{tgao0716@hust.edu.cn}}
\footnotetext[5]{Email: \texttt{duan@gbu.edu.cn}}
\footnotetext[3]{Email: \texttt{strelkovagi@sgu.ru}}
\footnotetext[1]{Corresponding author}

\date{\today}
\maketitle
\vspace{-0.3in}
\begin{abstract}
This paper establishes an indirect approximation theorem for the most probable transition pathway of a stochastic interacting particle system in the mean-field framework. This paper studied the problem of indirect approximation of the most probable transition pathway of an interacting particle system (i.e., a high-dimensional stochastic dynamical system) and its mean field limit equation (McKean-Vlasov stochastic differential equation). This study is based on the Onsager-Machlup action functional, reformulated the problem as an optimal control problem.  This paper completes the derivation using the stochastic Pontryagin's Maximum Principle. 
This paper proves the existence and uniqueness theorem for the solution to the mean-field optimal control problem of McKean-Vlasov stochastic differential equations and establishes a system of equations that determine the control parameters $\theta^{*}$ and $\theta^{N}$ respectively. There are few studies on the most probable transition pathways of stochastic interacting particle systems, it is still a great challenge to solve the most probable transition pathways directly or to approximate it with the mean field limit system. Therefore, this paper first gave the proof of correspondence between the core equation of Pontryagin's Maximum Principle, that is, Hamiltonian extreme condition equation. In other words, this relationship indirectly illustrates the correspondence between the most probable transition pathways of stochastic interacting particle systems and those of mean-field systems.

         \textbf{Keywords and Phrases:} Most probable transition pathway, Optimal control, Pontryagin's Maximum Principle, McKean-Vlasov Stochastic Differential Equation, Generative Adversarial Network (GAN) .

\end{abstract}
\section{ Introduction}
In recent years, particle systems with interactions have been extensively studied from various perspectives including mathematics, physics, chemistry, and biology \cite{liggett1985interacting}. This field has attracted significant interest from many researchers. Interacting particle systems are composed of multiple microscopic particles that interact with each other, representing an area of interdisciplinary research across various fields. Currently, research in this area is highly active, with a wide range of theoretical explorations and diverse applications.  In statistical physics, theoretical investigations of interacting particle systems primarily focus on simulating and understanding the behavior of complex systems such as phase transitions, critical phenomena \cite{konno1995phase}, and others. In the field of chemistry, research on interacting particle systems mainly involves intermolecular interactions, chemical reaction kinetics \cite{mielke2017non}, catalyst design, and related aspects. In biology, the study of interacting particle systems mainly deals with interactions among biomolecules, intracellular signaling, protein folding, assembly, etc. For instance, game theory-based models of cancer cell and stroma cell dynamics utilize interacting particle systems \cite{zheng2020game}.

The mean-field limit equations of interacting particle systems provide an approximate approach for studying many-body systems \cite{golse2016dynamics}. This method typically assumes that each particle is influenced by the average effect of all other particles, without accounting for specific pairwise interactions. Such an approximation is often justified and helps simplify the analysis of complex many-body systems. The mean-field limit equations generally consist of a set of differential equations that describe the evolution of each particle in the system \cite{yong2017linear}."

These equations are typically grounded in the principles of dynamics and statistical physics to capture the macroscopic behavior of the system. They describe the evolution of macroscopic properties such as particle positions, velocities, and momenta, while specific interactions between particles are modeled through the influence of a mean field. The exact form of the mean-field limit equations depends on the characteristics of the system being studied and the nature of the interactions involved. For instance, in statistical physics, mean-field equations can effectively describe the collective behavior of a large number of particles. Overall, mean-field limit equations offer a powerful approximation method that simplifies the analysis of many-body systems and extends to the study of stochastic dynamical systems.

Mean-field limit equations for particle systems can take various forms, with one prominent example being the McKean-Vlasov equation. The McKean-Vlasov equation is a type of partial differential equation that describes the dynamics of many-body systems by modeling the evolution of the particle density function over time while accounting for the effects of particle interactions. Specifically, the general form of the McKean-Vlasov equation is given by the follows:
$$
\frac{\partial f}{\partial t} + v \cdot \nabla_x f + F(t,x,f,\nabla_x f) \cdot \nabla_v f = 0.
$$
Here, \(f(t,x,v)\) represents the density function of particles, describing the density of particles at time \(t\), position \(x\), and velocity \(v\). And \(F(t,x,f,\nabla_x f)\) is a given function representing the dependence of the particle density function \(f\) on position \(x\) and velocity \(v\), typically depending on the specific characteristics and interaction forms of the system.

The McKean-Vlasov equation is widely used to describe the macroscopic behavior of systems with a large number of particles, such as gases and fluids. It serves as a powerful mathematical tool for analyzing many-body systems and holds substantial theoretical importance in understanding the dynamics of complex systems.

\subsection{Control Theory of McKean-Vlasov Type Stochastic Differential Equations}

The analysis of stochastic differential equations (SDEs) of McKean-Vlasov type has a long history. These equations were initially introduced by McKean \cite{mckean1966class} with the aim of rigorously treating certain nonlinear partial differential equations (PDEs). Subsequently, scholars delved into the study to address problems in their respective fields and to extend them to broader contexts. Jourdain et al. \cite{jourdain2007nonlinear} explored the existence and uniqueness of solutions to McKean-Vlasov type stochastic differential equations. The properties of solutions were discussed within the framework of chaotic propagation theory, as McKean-Vlasov equations appear to be effective models for describing large-scale particle dynamics influenced by mean-field interactions.

However, the optimal control problem driven by McKean-Vlasov stochastic differential equations (SDEs) seems to be a relatively new topic, with limited research in the literature related to stochastic control. The stochastic control problem for McKean-Vlasov stochastic differential equations shares many similarities with the mean-field game problem initially proposed by Lasry and Lions \cite{lasry2007mean}, as well as concurrently by Caines, Huang, and Malham\'{e} \cite{mei2020closed}. The differences and similarities between these two problems were analyzed and discussed in Carmona et al. \cite{carmona2013control}, emphasizing that solving the mean-field game problem involves optimizing before searching for fixed points, whereas by searching for fixed points before optimizing, we can obtain solutions to the optimal control problem of McKean-Vlasov SDEs.

\subsection{Pontryagin's Maximum Principle for The Most Probable Transition Pathways}
Looking from another perspective, optimal control theory is capable of transforming variational problems into corresponding optimal control problems without requiring numerical solution of the Euler-Lagrange equations \cite{wei2022optimal}. Optimal control theory naturally arises alongside variational methods. There are two interrelated approaches for detecting optimal controls: Pontryagin's Maximum Principle (PMP) and the Hamilton-Jacobi-Bellman (HJB) principle. A fascinating historical account of the development of these theories can be found in the literature \cite{liberzon2011calculus}. Optimal control problems are also closely related to dynamical systems, and one of the main methods for solving optimal control problems is to derive a set of necessary conditions (i.e., the Euler-Lagrange differential equations). These conditions must be satisfied by any optimal trajectory solution. Optimal control problems can be viewed as optimization problems in infinite-dimensional spaces, thus they are often challenging to solve. Although sufficient and necessary conditions for first and second-order optimization exist \cite{liberzon2011calculus}, they still pose significant challenges for numerical computations.

Pontryagin and his team proposed and derived the Maximum Principle in the 1950s, marking a true milestone in optimal control theory. It states that any optimal control problem with an optimal trajectory solution must address what is known as the extended Hamiltonian system \cite{liberzon2011calculus}. Similarly, this also involves a two-point boundary value problem (also known as forward-backward differential equations), along with a maximization condition on the Hamiltonian function. The mathematical significance of Pontryagin's Maximum Principle lies in making the maximization of the Hamiltonian much easier than the original infinite-dimensional control problem. This enables the derivation of closed-form solutions for certain types of optimal control problems, including the case of linear-quadratic systems. The Maximum Principle has demonstrated its applicability across various disciplines. For instance, Bartholomew-Biggs optimized spacecraft orbits using Pontryagin's Maximum Principle \cite{bartholomew1980optimisation}.

\subsection{Onsager-Machlup Action Functional and Maximum Probability Transition Trajectories}

As various stochastic factors are considered, stochastic dynamical systems have become effective tools for studying complex phenomena. They are widely applied in modeling various fields such as physics \cite{arnold1995random,duan2015introduction}, biology \cite{wang2007effective,imkeller2001stochastic}, and finance \cite{rosenthal2012mathematical}. Stochastic differential equations, as mathematical models, are prevalent across different domains including physics \cite{temam2012infinite}, biology \cite{kreutz2009systems}, engineering, and finance \cite{rosenthal2012mathematical}. They account for stochastic fluctuations due to environmental factors, making them important models for simulating complex phenomena and predicting rare events \cite{gao2016dynamical}. The stochastic fluctuations in these systems can lead to unexpected rare events. Under the influence of external noise, the dynamical behavior described by stochastic differential equations can differ significantly from deterministic differential equations \cite{herr2023three}. For deterministic differential equations, state transitions between metastable states under the vector field do not occur. However, even with minor noise influence, state transitions between equilibrium states of the vector field described by stochastic differential equations may occur. Literature on stochastic differential equations mainly focuses on Gaussian dynamics, i.e., stochastic differential equations under Brownian motion \cite{duan2015introduction}, which has found applications in various fields. Biswas et al. \cite{biswas2021characterising} focused on numerically characterizing the volume of attractors in the state space of dynamical systems excited by additive Gaussian white noise. Other studies have investigated the behavior between states of prosperity and extinction in population systems influenced by delayed and correlated Gaussian colored noise, as well as the phenomenon of stochastic resonance \cite{wang2021impact}. 

From classical Newtonian mechanics, we know that as long as the initial state of a system and the laws governing the change of system state parameters over time are known, the state of the system at any time can be predicted. In reality, phenomena in engineering and natural sciences are inevitably subject to noise interference. These complex noise sources may arise from interactions among various units within the system, external random disturbances, random initial conditions, and so on. Therefore, noise becomes the most common stochastic factor. Dynamical systems also exhibit a high response to noise, thereby demonstrating various dynamics driven by noise, including noise-induced transitions \cite{horsthemke1984noise,kitahara1979coloured}, stochastic resonance \cite{mori2002noise}, chaos \cite{lai2011transient}, and state transitions \cite{jourdain2007nonlinear}. Noise-induced migration phenomena occur in various systems, such as chemical reactions \cite{dykman1994large} and physically dynamic switching systems \cite{bomze2012noise}. This interesting migration phenomenon often arises due to the appearance of noise, which alters the deterministic dynamical behavior of the original deterministic system, causing stable states in the system to be disturbed and becoming metastable. The dynamic properties of metastable states in the system are unstable, leading to the occurrence of state transitions \cite{budhiraja2019analysis}. This class of unstable system's stochastic fluctuations may trigger rare events, and studying such migration phenomena can help us understand the nature of dynamical systems more intuitively. For example, the properties of migration trajectories and quantifying the impact of stochastic noise on dynamical systems can help understand the essence of abrupt changes in complex systems. For many irreversible systems, the absence of equilibrium states makes it difficult to analyze their asymptotic behavior and migration phenomena.

The Freidlin-Wentzell large deviation theory and Onsager-Machlup action functional theory are effective tools for studying such migration phenomena. However, the Freidlin-Wentzell large deviation theory focuses on perturbations with infinite time and infinitesimal noise. The Onsager-Machlup action functional theory characterizes the most probable transition paths of diffusion processes with nonzero noise and can effectively solve the problem of state transitions in stochastic dynamical systems driven by noise of certain intensity within a finite time. Therefore, we adopt the Onsager-Machlup action functional theory to study migration phenomena within a finite time. For example, the change in substance concentration after a certain reaction time in chemical reaction systems \cite{maier1993escape}, the change in carbon dioxide concentration over time in the carbon cycle system, and the change in the population of biological species over time in river aquatic plant systems. The significance of the Onsager-Machlup action functional theory lies in our concern for the state transition problem within a certain migration time T, which is more practically significant for predicting the occurrence of rare events and controlling major natural disasters. In addition, the Onsager-Machlup action functional has been applied in data assimilation \cite{sugiura2017onsager}, fluctuation theorems \cite{singh2008onsager}, and quantum physics \cite{onsager1953fluctuations}, among other fields. The Onsager-Machlup action functional can be used to study the most probable migration trajectory of stochastic dynamical systems because it quantifies the probability of sample trajectories in the neighborhood of any reference trajectory within a tubular region. By means of the Onsager-Machlup action functional, we can obtain the probability distribution of solution trajectories of stochastic dynamical systems, thereby calculating the most probable migration trajectory. The Onsager-Machlup action functional measures the probability of rare events, such as the maximum probability transition trajectory between metastable states. Under the constraint of connecting two metastable states, the extremum (usually expressed as a minimum value) of the action functional is considered the most probable migration path. Therefore, from the perspective of this functional, the most probable migration trajectory is the trajectory with the maximum probability, which corresponds to the minimum value point of the Onsager-Machlup action functional. Thus, we have explained the significance and solution approach of the most probable migration trajectory of stochastic dynamical systems. In summary, the problem of the most probable migration trajectory of stochastic dynamical systems can be regarded as a minimization problem of the Onsager-Machlup action functional.

Onsager and Machlup \cite{onsager1953fluctuations} were the first to study the distribution of sample trajectories of a class of diffusion processes, focusing on the probability within a given neighborhood. Subsequently, Stratonovich et al. \cite{feller1954diffusion} extensively studied the Onsager-Machlup action functional theory in stochastic differential equations and provided rigorous mathematical derivations. The key to the derivation lies in the Girsanov transformation, which transforms the transition probability of the diffusion


\section{Preliminaries}\label{sec3}
In this section, we prepare for the main theorems to be deduced later. This chapter mainly introduces the commonly used mathematical symbols and important mathematical assumptions, lemmas and so on. Firstly, a kind of Brown type random interacting particle system studied in this chapter and its corresponding mean field limit equation McKean-Vlasov random differential equation are introduced in detail, and then the important reference theorems are introduced in detail. The Onsager-Machlup functional of McKean-Vlasov stochastic differential equation is included. Finally, the research of control theory on McKean-Vlasov stochastic differential equations is introduced, especially the necessary and sufficient conditions of solutions.

\subsection{Stochastic interacting particle system and its mean field McKean-Vlasov stochastic differential equation}

There is a connection between studying detailed descriptions of the laws of particle evolution and simplified descriptions, and this connection is usually established through the mean field theory. The mean field theory allows us to consider the collective behavior of a large number of particles from a system, rather than the behavior of each particle individually. By averaging the interactions between particles, the equation describing the whole behavior of the system can be obtained.

For example, on the one hand there is the Liouville equation \cite{burkholder1991topics} :
\begin{equation}
\partial_t u+\sum_1^N v_i \partial_{x_i} u+\sum_{j \neq i}-\nabla V_N\left(x_i-x_j\right) \partial_{v_j} u=0,
\end{equation}
Where $x_i$represents the position of the particle, $v_i$represents the velocity of the particle, and $u\left(t, x_1, v_1, \ldots, x_n, v_N\right)$is the existence density at time $t$, assuming that the interaction function $v$is symmetric with respect to $N$particles. Call $V_N(\cdot)$a potential interaction of pairs. On the other hand, there is the following Boltzmann equation \cite{burkholder1991topics}
\begin{equation}
\partial_t u+v \cdot \nabla_x u=\int_{\boldsymbol{R}^3 \times S_2}\left(u(x, \tilde{v}) u\left(x,  \tilde{v}^{\prime}\right)-u(x, v) u\left(x,  v^{\prime}\right)\right)\left|\left(v^{\prime}-v\right) \cdot n\right| d v^{\prime} d n,
\end{equation}
Where $\tilde{v}, \tilde{v}^{\prime}$is obtained by exchanging the corresponding components of $v$and $v^{\prime}$in the direction of $n$, i.e. :
$$
\begin{aligned}
& \tilde{v}=v+\left(v^{\prime}-v\right) \cdot n n ,\\
& \tilde{v}^{\prime}=v^{\prime}+\left(v-v^{\prime}\right) \cdot n n.
\end{aligned}
$$
Here, $u(t, x, v)$is the location of $x$, the velocity of $v$, and the time of existence density of $t$.

\begin{definition} \label{Definition 5.1}(\textbf{Stochastic Differential Equation for Stochastical Interacting particle System}) \quad For $\rm N$particles on $\mathbb{R}^d$, Assuming its initial distribution is $u_0^{\otimes N}$, the stochastic differential equation (SDE) satisfies the following form:
\begin{equation}
d x_t^i=\sigma d B_t^i+\frac{1}{N} \sum_1^N b\left(x_t^i, x_t^j\right) d t, \quad 1 \leq i \leq N.
\end{equation}
Where $B^i$ is the independent identically distributed Brown motion in $\mathbb{R}^d$, $b$ is the drift coefficient of $\mathbb{R}^d \times \mathbb{R}^d \to \mathbb{R}^d$, $\sigma$ is the diffusion coefficient, Represents the noise intensity, here taken as a constant.
\end{definition}

For the following nonlinear equation
\begin{equation}
\begin{aligned}
& \partial_t u=\frac{1}{2} \Delta u-\operatorname{div}\left(\int b(\cdot, y) u(t, y) d y u\right), \\
& u_{t=0}=u_0.
\end{aligned}
\end{equation}
The 
Let $P_t^0$ represent the Brown transition density, and according to the perturbation formula we have:
\begin{equation}
u_t(x)-u_0 P_t^0(x)=\int_0^t d s_1 \int d x_1 d x_2 u_{s_1}\left(x_1\right) u_{s_1}\left(x_2\right) b\left( x_1,  x_2\right) \nabla_{x_1} P_{t-s_1}^0\left(x_1, x\right) .
\end{equation}
Continuing the same perturbation for $u_{s_1}\left(x_1\right) u_{s_1}\left(x_2\right), \ldots$, we find by induction:
\begin{equation}
\begin{aligned}
& u_t=u_0 P_t^0+\sum_{k=1}^m \int_{0<s_k<\ldots<s_1<t} d s_k  d s_1 u_0^{\otimes k+1} P_{s_k}^0 B \cdot \nabla P_{s_{k-1}-s_k}^0  B \cdot \nabla P_{t-s_1}^0+R_m, \\
& R_m=\int_{0<s_{m+1}<s_m<\cdots<s_1<t} d s_{m+1}  d s_1 u_{s_{m+1}}^{\otimes m+2} B \cdot \nabla P_{s_m-s_{m+1}}^0  \nabla P_{t-s_1}^0 .
\end{aligned}
\end{equation}
Where $P_t^0$ acts as a tensor on a function of any number of independent variables, $B \cdot \nabla$maps a function of $k$ to a variable of $(k+ 1)$ in the following way:
$$
[B \cdot \nabla] f\left(x_1, \ldots, x_{k+1}\right)=\sum_1^k b\left(x_i, x_{i+1}\right) \nabla_i f\left( x_1, \ldots,  x_k\right) .
$$

\begin{definition} (\textbf{McKean Diffusion Process}) \quad Suppose the function $b:\mathbb {R}^d \times \mathbb{R}^d \rightarrow \mathbb{R}^d$ Satisfies Lipschitz boundedness, And in the $\left(\mathbb{R}^d \times C_0\left( \mathbb{R}_{+}, \mathbb{R}^d\right)\right)^{\boldsymbol{N}^*}$ space have product metric $\left(u_0 \otimes W\right)^ {\otimes N^*} $ ($u_0 $ is probability in $\mathbb{R}^d$, $B $ $\in \mathbb {R} ^ d $ is standard Brownian motion), particle $X ^ {I, N}, I = 1, \ldots, N $, satisfied
\begin{equation}
\begin{aligned}
& d X_t^{i, N}=d B_t^i+\frac{1}{N} \sum_{j=1}^N b\left(X_t^{i, N}, X_t^{j, N} \right) d t, \quad i=1, \ldots, N ,\\
& X_0^{i, N}=x_0^i.
\end{aligned}
\end{equation}
Here $x_0^i,\left(w^i\right), i \geq 1$ are the canonical coordinates on the product space $\left(R^d \times C_0\right)^{N^*}$.
\end{definition}

As the number of particles $\rm N$approaches infinity, each particle $X^{i, N}$corresponds to a natural limit $\bar{X}^i$. Each $\bar{X}^i$pair corresponds to a new nonlinear process, which we call McKean-Vlasov Stochastic Differential Equation (SDE) \cite{burkholder1991topics}.

\begin{definition} ( \textbf{Brown type McKean-Vlasov SDE} ) \quad  Suppose there is a probability space $\left(\Omega, F, F_t,\left(B_t\right)_{t \geq 0}, X_0, P\right)$, equipped with $\mathbb{R}^{d} -valued$ Brownian motion $\left(B_t\right)_{t \geq 0}$. $X_0$ is $F_0$ measurable, and has distribution $\mu_0$. The stochastic process $X_t$ satisfies the following McKean-Vlasov stochastic differential equation (SDE) :
\begin{equation}\label{MVSDE}
\begin{aligned}
& d X_t=\int b\left(X_t, y\right) \mu_t(d y) d t + \sigma d B_t, \quad 0 \leq t \leq T, \\
& X_{t=0}=X_0.
\end{aligned}
\end{equation}
Here $\mu_t$ is the distribution of $X_t$, and $\sigma$ represents the intensity of Brownian noise.
\end{definition}

\begin{remark}
(i) At present, there are two main categories of noise disturbance terms in McKean-Vlasov stochastic differential equations: one is Gaussian noise, which is simple but very applicable. In mathematics, it is the generalized time derivative of Brown motion, which is an important kind of stationary Gaussian process with some good properties, such as continuous sample orbit and light tail of probability density function. The other is non-Gaussian noise, which is mainly simulated by L\'{e}vy process. It is a very important random process, which has different properties from Brown motion, mainly reflected in its discontinuous sample orbit and heavy tail of probability density function.

(ii) Since this paper studies the migration orbit problem of stochastic dynamical systems based on Onager -Machlup functional theory of action, and in this chapter, we hope to establish the Onager -Machlup functional approximation theorem of McKean-Vlasov stochastic differential equations and interacting particle systems. It should be noted that the Onsager-Machlup functional theory of action for stochastic dynamical systems driven by l\'{e}vy noise is not mature even in additive cases. For this reason, we consider stochastic interacting particle systems driven by Gaussian Brown noise and Brown type McKean-Vlasov stochastic differential equations.

According to the form of definition \ref{Definition 5.1}, our condition for the noise of a particle system is the independent uniformly distributed Brown noise $B_t^{i}$, thus we obtain the McKean-Vlasov stochastic differential equation in the shape of the equation \eqref{MVSDE}. If we consider that the particle system is subject to the same Brown noise $B_t$, then we get a McKean-Vlasov stochastic partial differential equation. At present, the study of Onsager-Machlup action functional theory for McKean-Vlasov stochastic partial differential equations has not produced good results, although corresponding results have been obtained in the sense of large deviation.
\end{remark}

\subsection{Onsager-Machlup action functional of McKean-Vlasov stochastic differential equation}

The Onsager-Machlup action functional of classical stochastic differential equations driven by Brown motion has been studied extensively in the last few decades. Ikeda and Watanabe \cite{ikeda2014stochastic} derive the Onager -Machlup action functional for the reference path $\phi \in C^2([0, 1], \mathbb{R}^d)$in the highest norm sense. Shepp and Zeitouni \cite{shepp1992note} show that this result holds for the highest norm equivalent in Cameron-Martin Spaces. Liu et al. \cite{liu2023onsager} In a recent work in 2023, the Onsager-Machlup action functional of a special class of McKean-Vlasov stochastic differential equations with drift function $f$is derived. Next, we give the basic definitions and symbols of the mathematical quantities needed in this chapter.

Let $\mathscr{P}$ is the space of all the probabolity measures $\mu \in \mathbb{R}^{\mathrm{d}}$, and let
$$
\mathscr{P}_2\left(\mathbb{R}^{\mathrm{d}}\right)=\left\{\mu \in \mathscr{P}\left(\mathbb{R}^{ d}\right): \mu\left(|\cdot|^2\right):=\int_{\mathbb{R}^{\mathrm{d}}}|x|^2 \mu( \mathrm {d}x)<\infty\right\}.
$$
Here $\mathscr{P}_2$ is the p-complete, separable metric space under the Wasserstein metric. Next define the Wasserstein metric.

\begin{definition}(\textbf{Coupling of Probability Measures}) \quad let $\mu$ and $\nu$ be the probability metric space $\mathscr{P}_2\left(\mathbb{R}^{\mathrm{d}}\right)$ Suppose that for $\pi \in \mathscr{C}(\mu, v) \in \mathbb{R}^{\mathrm{d}} \times \mathbb{R}^{d}$, the following two conditions are true: \\
(i) $\pi\left(\cdot \times \mathbb{R}^{\mathrm{d}}\right)=\mu$; \\
(ii)$\pi\left(\mathbb{R}^ {\mathrm{d}} \times \cdot\right)=v$.
Are we call set $\mathscr{C}(\mu, v) \in \mathbb{R}^{\mathrm{d}} \times \mathbb{R}^{d}$ for the probability measure decoupling collection.
\end{definition}

\begin{definition} (\textbf{Wasserstein Metric} \cite{liu2023onsager}) \quad Let $\mu$ and $\nu$ are two probability measure in the probability  metric space $\mathscr{P}_2\left(\mathbb{R}^{\mathrm{d}}\right)$. Wasserstein Metric of $\mu$ and $\nu$ are:
\begin{equation}
\mathbb{W}_2(\mu, v):=\inf _{\pi \in \mathscr{C}(\mu, v)}\left(\int_{\mathbb{R}^{\mathrm{ d}} \times \mathbb{R}^{\mathrm{d}}}|x-y|^2 \pi(\mathrm{d} x, \mathrm{~d} y)\right)^{\frac{ 1}{2}}, \mu, v \in \mathscr{P}_2\left(\mathbb{R}^{\mathrm{d}}\right).
\end{equation}
\end{definition}

\begin{remark}
(i) For any random variable $X$ and $Y$ with the value in $\mathbb{R}^d$, we have
$$
\mathbb{W}_2\left(\mathscr{L}_X, \mathscr{L}_Y\right) \leq\left[E|X-Y|^2\right]^{\frac{1}{2}} ,
$$
Where $\mathscr {L} _ {\ xi} $said random variable $\ xi $in $\ \mathbb {R} ^ d $on distribution.

(ii) If $\phi_t$is a definite track, the distribution of the track $\phi_t$is called the Dirac measure, i.e. $\mathscr{L}_{\phi_t}=\delta_{\phi_t} $.
\end{remark}

\begin{definition} \cite{ren2020space} Let $T \in(0, \infty]$, when the time $T=\infty$, $[0, T]=[0, \infty)$.

(i) For functions $h: \in \mathscr{P}_2\left(\mathbb{R}^{\mathrm{d}}\right)$, if functional
$$
L^2\left(\mathbb{R}^{\mathrm{d}} \rightarrow \mathbb{R}^{\mathrm{d}}, \mu\right) \ni \phi \mapsto h\left(\mu \circ(\operatorname{Id}+\phi)^{-1}\right)
$$
is Fr\'{e}chet differentiable at $\phi=0 \in L^2\left(\mathbb{R}^{\mathrm{d}} \rightarrow \mathbb{R}^{\mathrm{d}}, \mu\right)$. That is to say, existence (unique) $\xi \in L^2\left(\mathbb{R}^{\mathrm{d}} \rightarrow \mathbb{R}^{\mathrm{d}}, \mu\right)$ such that
$$
\lim _{\mu\left(|\phi|^2\right) \rightarrow 0} \frac{h\left(\mu \circ(\mathrm{Id}+\phi)^{-1}\right)-h(\mu)-\mu(\langle\xi, \phi\rangle)}{\sqrt{\mu\left(|\phi|^2\right)}}=0 .
$$
Then we call the function $h: \mathscr{P}_2\left(\mathbb{R}^{\mathrm{d}}\right) \rightarrow \mathbb{R}$in$\mu \in \mathscr{P}_2\left(\mathbb{R}^{\mathrm{d}}\right)$is $L$- differentiable. Note $\partial_\mu h(\mu)=\xi$, and it is  called the $L$- derivative of the function $h$at $\mu$.
 
(ii) If for all $\mu$ in $\mathscr{P}_2\left(\mathbb{R}^{\mathrm{d}}\right)$, function $h: \mathscr{P}_2\left(\mathbb{R}^{\mathrm{d}}\right) \rightarrow \mathbb{R}$ has L- derivative $\partial_\mu h(\mu)$, then $h$ is L-differentiable in $\mathscr{P}_2\left(\mathbb{R}^{\mathrm{d}}\right)$. In addition, if$\left(\partial_\mu h(\mu)\right)(y)$  a $y \in \mathbb{R}^{\mathrm{d}}$ on differentiable version, And $\left(\partial_\mu h(\mu)\right)(y)$ and $\partial_y\left(\partial_\mu h(\mu)\right)(y)$ in $(\mu, y) \in \mathscr{P}_2\left(\mathbb{R}^{\mathrm{d}}\right) \times \mathbb{R}^{\mathrm{d}}$ is continuous. We write $h \in C^{(1,1)}\left(\mathscr{P}_2\left(\mathbb{R}^{\mathrm{d}}\right)\right)$.

(iii) If for all parameters in $[0, T] \times \mathbb{R}^{\mathrm{d}} \times \mathscr{P}_2\left(\mathbb{R}^{\mathrm{d}}\right)$, function $h:[0, T] \times \mathbb{R}^{\mathrm{d}} \times \mathscr{P}_2\left(\mathbb{R}^{\mathrm{d}}\right) \rightarrow \mathbb{R}$ derivative $\partial_t h(t, x, \mu), \partial_x h(t, x, \mu), \partial_x^2 h(t, x, \mu), \partial_\mu h(t, x, \mu)(y), \partial_y \partial_\mu h(t, x, \mu)(y)$ exists. And it's joint continuous in $(t, x, \mu)$ or $(t, x, \mu, y)$.  The function is said to belong to the class $C^{1,2,(1,1)}$. If all the derivatives in $[0, T] \times \mathbb{R}^{\mathrm{d}} \times \mathscr{P}_2\left(\mathbb{R}^{\mathrm{d}}\right)$ are bounded, then we remark the function $f$ belongs to the class $f \in C_b^{1,2,(1,1)}$.

(iv) If function $h \in C^{1,2,(1,1)}\left([0, T] \times \mathbb{R}^{\mathrm{d}} \times \mathscr{P}_2\left(\mathbb{R}^{\mathrm{d}}\right)\right)$ and  $$
(t, x, \mu) \mapsto \int_{\mathbb{R}^{\mathrm{d}}}\left\{\left\|\partial_y \partial_\mu h\right\|+\left\|\partial_\mu h\right\|^2\right\}(t, x, \mu)(y) \mu(\mathrm{d} y)
$$ are locally bounded. That is to say, they are bounded in a compact subset of $[0, T] \times \mathbb{R}^{\mathrm{d}} \times \mathscr{P}_2\left(\mathbb{R}^{\mathrm{d}}\right)$. Then we have $$
h \in \mathscr{C}\left([0, \infty) \times \mathbb{R}^{\mathrm{d}} \times \mathscr{P}_2\left(\mathbb{R}^{\mathrm{d}}\right)\right).
$$
\end{definition}

With the above basic definition and mathematical notation, we can derive the Onsager-Machlup functional theorem of McKean-Vlasov stochastic differential equation.

\begin{theorem} \label{Theorem 5.1}\cite{liu2023onsager} \textbf{(McKean-Vlasov SDE's Onager -Machlup action functional)}\quad Consider the following McKean-Vlasov stochastic differential equation:
$$
\mathrm{d} X_t=f\left(t, X_t, \mathscr{L}_{X_t}\right) \mathrm{d} t+\mathrm{d} B_t, X(0)=x_0,
$$
Where $f:[0,T] \times \mathbb{R}^{\mathrm{d}} \times \mathscr{P}_2\left(\mathbb{R}^{\mathrm{d}}\right) \rightarrow \mathbb{R}^{\mathrm{d}}, B_t$ is Brownian motion in $\mathbb{R}^d$.   Given a complete probability space $\left(\Omega, \mathcal{F},\left(\mathcal{F}_t\right)_{t>0}, \mathbb{P}\right)$, $\mathscr{L}_{X_t}$ is the distribution of $X_t$. Assuming that the conditions (H1), (H2) and (H3) are all satisfied, $X_t$is the solution of a random differential equation, and the reference path $\phi$is the function that makes $\phi_t-x$belong to the Cameron-Martin space $\mathcal{H}$, and assume the drift function $f \in C_b^{1,2,(1,1)}\left([0,1] \times \mathbb{R}^{\mathrm{d}} \times \mathscr{P}_2\left(\mathbb{R}^{\mathrm{d}}\right)\right)$. Then for any $L^2\left([0,1], \mathbb{R}^{\mathrm{d}}\right)$norm, the Onager-Machlup functional of $X_t$ exists and has the following form:
$$
L\left(t, \phi, \dot{\phi}, \delta_\phi\right)=\int_0^T\left|\dot{\phi}_t-f\left(t, \phi_t, \mathscr{L}_{\phi_t}\right)\right|^2 \mathrm{~d} t+\int_0^T \operatorname{div}_x f\left(t, \phi_t, \mathscr{L}_{\phi_t}\right) \mathrm{d} t.
$$
Here $\operatorname{div}_x f=\sum_{i=1}^{\mathrm{d}} \partial_{x_i} f_i\left(t, \phi_t, \mathscr{L}_{\phi_t}\right)$  represents the divergence of $\phi_t \in \mathbb{R}^{\mathrm{d}}$.
\end{theorem}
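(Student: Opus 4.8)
The plan is to follow the classical route to Onsager--Machlup functionals---a Girsanov change of measure followed by a small-tube asymptotic analysis---while exploiting that, for a McKean--Vlasov SDE, the marginal law $\mu_t := \mathscr{L}_{X_t}$ is a \emph{deterministic} curve in $\mathscr{P}_2(\mathbb{R}^{\mathrm{d}})$. Under (H1)--(H3) the equation is well posed, so $t \mapsto \mu_t$ is uniquely determined and the solution $X$ simultaneously solves the ordinary (non-McKean) time-inhomogeneous SDE $\mathrm{d}X_t = \bar{f}(t, X_t)\,\mathrm{d}t + \mathrm{d}B_t$ with the frozen drift $\bar{f}(t,x) := f(t,x,\mu_t)$. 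This brings the problem within reach of the Ikeda--Watanabe and Shepp--Zeitouni theory for a time-dependent drift, the genuinely new work being how the drift's measure argument survives into the final functional.

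Concretely, I would fix a reference path with $t \mapsto \phi_t - x$ in the Cameron--Martin space $\mathcal{H}$, and via Girsanov rewrite $\mathbb{P}(\|X - \phi\|_{L^2} < \varepsilon)$ as an expectation against the law of a Brownian motion $W$ started at $x$, weighted by $\exp\big(\int_0^T \bar{f}(t,W_t)\cdot \mathrm{d}W_t - \tfrac{1}{2}\int_0^T |\bar{f}(t,W_t)|^2\,\mathrm{d}t\big)$. On the event $\{\|W - \phi\|_{L^2} < \varepsilon\}$ I would Taylor-expand $\bar{f}(t,W_t)$ about $\phi_t$: the zeroth-order term $\int_0^T \bar{f}(t,\phi_t)\cdot\mathrm{d}W_t$ is split using $\mathrm{d}W_t = \mathrm{d}\phi_t + \mathrm{d}(W_t-\phi_t)$, the increment part being turned by integration by parts into $-\int_0^T (W_t-\phi_t)\cdot \mathrm{d}\big[\bar{f}(t,\phi_t)\big]$ plus a boundary term, both $O(\varepsilon)$ in the tube; the first-order term $\int_0^T \nabla_x \bar{f}(t,\phi_t)(W_t-\phi_t)\cdot\mathrm{d}W_t$ is where the divergence is produced---an It\^{o} computation (passing between the It\^{o} and Stratonovich integrals, using $\mathrm{d}\langle W-\phi\rangle_t = \mathrm{d}t$) yields the correction $-\tfrac{1}{2}\int_0^T \operatorname{div}_x \bar{f}(t,\phi_t)\,\mathrm{d}t$, the remaining martingale and quadratic pieces being again $O(\varepsilon)$. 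Combining these with $-\tfrac{1}{2}\int_0^T|\bar{f}(t,\phi_t)|^2\,\mathrm{d}t$, completing the square in the drift terms and absorbing the resulting $\tfrac{1}{2}\int_0^T|\dot{\phi}_t|^2\,\mathrm{d}t$ into the Cameron--Martin shift relating the $\phi$-centred and $0$-centred Wiener-tube probabilities, I would obtain $\mathbb{P}(\|X-\phi\|<\varepsilon) \asymp \mathbb{P}(\|W\|<\varepsilon)\exp\big(-\tfrac{1}{2}\int_0^T|\dot{\phi}_t - \bar{f}(t,\phi_t)|^2\,\mathrm{d}t - \tfrac{1}{2}\int_0^T \operatorname{div}_x \bar{f}(t,\phi_t)\,\mathrm{d}t\big)$, which is the asserted functional once the $\varepsilon$-dependent normalizing factor is divided out.

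The hard part---the genuinely new point relative to the classical theorem, and the step I would expect to absorb most of the effort---is the treatment of the measure argument. The Girsanov computation carries the solution law $\mu_t=\mathscr{L}_{X_t}$, and identifying when this may be replaced by $\mathscr{L}_{\phi_t} = \delta_{\phi_t}$ in the limiting functional, as in the statement, is exactly where the $C_b^{1,2,(1,1)}$-regularity is needed: boundedness of the $L$-derivative $\partial_\mu f$ gives a Wasserstein-Lipschitz estimate $|f(t,x,\mu)-f(t,x,\nu)| \lesssim \mathbb{W}_2(\mu,\nu)$, which must be coupled with $\mathbb{W}_2(\mathscr{L}_{X_t},\delta_{\phi_t})^2 \le \mathbb{E}|X_t-\phi_t|^2$ and a conditioning-on-the-tube argument to control the discrepancy, while boundedness of $\partial_x^2 f$ and $\partial_y\partial_\mu f$ is what makes the Taylor remainders and the It\^{o} corrections uniformly negligible. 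One must also re-establish the Shepp--Zeitouni equivalence of the $L^2$- and Cameron--Martin-tube asymptotics for the time-inhomogeneous, measure-frozen drift $\bar{f}$, checking that the integrability and continuity inputs it requires follow from (H1)--(H3) and the derivative bounds; with those in hand, the Girsanov step, the It\^{o}/Stratonovich bookkeeping, and the completion of the square are routine.
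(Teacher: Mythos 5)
You are not competing with an in-paper argument here: the paper does not prove Theorem \ref{Theorem 5.1} at all, it quotes it from \cite{liu2023onsager}, so the only fair comparison is with the derivation in that reference. The first two-thirds of your plan is the standard and correct route, and it is essentially the one the cited work follows: for a well-posed McKean--Vlasov SDE the marginal law $\mu_t=\mathscr{L}_{X_t}$ is a deterministic curve, so $X$ also solves the time-inhomogeneous SDE with frozen drift $\bar f(t,x)=f(t,x,\mu_t)$, and the Girsanov change of measure, the Taylor expansion in the tube, and the It\^{o} correction producing $-\tfrac12\int_0^T\operatorname{div}_x\bar f(t,\phi_t)\,\mathrm{d}t$ are the classical Ikeda--Watanabe/Shepp--Zeitouni steps, which you correctly flag as needing to be rechecked for a time-dependent drift and the $L^2$ tube.

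The genuine gap is exactly at the step you yourself identify as the crux, and your proposed mechanism for it does not work. The measure that enters the drift and the Girsanov density is the \emph{unconditional} law of the solution; it is fixed by the equation and does not concentrate as the tube radius shrinks, and conditioning on the rare tube event never feeds back into it. Consequently $\mathbb{E}|X_t-\phi_t|^2$, and hence your bound on $\mathbb{W}_2(\mathscr{L}_{X_t},\delta_{\phi_t})$, is of order one (it contains at least the Brownian variance), so the discrepancy $f(t,\phi_t,\mu_t)-f(t,\phi_t,\delta_{\phi_t})$ contributes a non-negligible, $\phi$-dependent amount to the exponent and cannot be absorbed as an error term. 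A concrete test: take $f(t,x,\mu)=\int y\,\mu(\mathrm{d}y)$ and $X_0=x_0$, so that $X_t=x_0e^t+B_t$; the tube asymptotics around $\phi$ give the exponent $\tfrac12\int_0^T|\dot\phi_t-x_0e^t|^2\,\mathrm{d}t$, i.e.\ the drift evaluated at the solution law, not $\tfrac12\int_0^T|\dot\phi_t-\phi_t|^2\,\mathrm{d}t$ as the substitution $\mu_t\mapsto\delta_{\phi_t}$ would predict. So the Dirac form in the statement cannot follow from generic Wasserstein--Lipschitz regularity plus a conditioning argument; it can only come from the specific structural hypotheses (H1)--(H3) on the ``special class'' of drifts treated in \cite{liu2023onsager}, which your proposal never actually invokes. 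Either carry out the proof with $f(t,\phi_t,\mathscr{L}_{X_t})$ in the functional (for which your Girsanov computation essentially suffices) or make explicit how (H1)--(H3) collapse the measure argument to $\delta_{\phi_t}$; as written, the final substitution is unjustified, and for general measure-dependent $f$ it is false.
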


\section{Study on the Most Probable Transition Pathway of Stochastic Interacting Particle Systems based on the Stochastic Pontryagin's Maximum Principle}
In this section, we mainly aim to establish the approximation theorem of the maximum possible migration orbit for randomly interacting particle systems. In the background of the research, we state the mean field approximation theorem, so it is meaningful to consider the maximum possible transfer orbit of the mean field limit McKean-Vlasov stochastic differential equation to approximate the orbit of the particle system. Next, we establish the mean field approximation theorem for the maximum possible migration orbit of the particle system.

\subsection{The Most Probable Transition Pathway for Stochastic Interacting Particle Systems}
In general, the stochastic interacting particle systems in $\mathbb{R}^d$ are described by the following stochastic differential equations:
\begin{equation} \label{paticle system}
d X_t^i=f^i\left(t, X_t^1, \ldots, X_t^N\right) d t+\sigma^i\left(t, X_t^1, \ldots, X_t^N\right) d B_t^i, \quad i=1,\cdots, N,
\end{equation}
where $f^i:[0, T] \times \mathbb {R}^{Nd}\to\mathbb{R}^d$ are the drift functions, $\sigma^i:[0, T] \times \mathbb {R}^{Nd}\to\mathbb{R}^d\times\mathbb{R}^k$ are the diffusion functions, and $B^i$ are independent $\mathbb{R}^k$-valued Brownian motions on the filtered probability space $(\Omega,\mathcal{F},\mathbb{F}=\left(\mathcal{F}_t\right)_{0 \leq t \leq T},\mathbb{P})$. 

In our framework, we shall consider special cases of stochastic interacting particle systems \eqref{paticle system}. More precisely, let $k=d$, the drift function $f^i(t,x^1,\cdots,x^N)=\frac{1}{N} \sum_{j=1}^N b(x^i, x^j)$ for some function $b:\mathbb{R}^d\times\mathbb{R}^d\to\mathbb{R}^d$ and the diffusion functions $\sigma^i$ be constants denoted by $\sigma$. To this end, we consider the following stochastic interacting particle systems
\begin{equation} \label{paticle system1}
d Y^N_t=F(Y^N_t) d t+\sigma d W_t^N, 
\end{equation}
where $Y^N_t = (X_t^1,\cdots,X_t^N)^{\top}$, $W^N_t = (B_t^1,\cdots,B_t^N)^{\top}$ and 
$$
F(Y^N_t) = \left(\frac{1}{N} \sum_{j=1}^N b(X_t^{1}, X_t^j),\cdots, \frac{1}{N} \sum_{j=1}^N b(X_t^{N}, X_t^j)\right)^{\top}.
$$  
The Onsager-Machlup action functional corresponding to the stochastic interacting particle systems \eqref{paticle system1} is:
\begin{equation} \label{5.16}
S^N_{T}\left(x, \dot{x}\right)=\frac{1}{2}\int_0^TL^N(x(t),\dot{x}(t))
\mathrm{~d} t,
\end{equation}
where $L^N$ is the Lagrangian of the Onsager-Machlup action functional \eqref{5.16} expressed as 
\begin{equation}\label{lagrangian}
L^{N}(x, \dot{x})=\left \|\sigma^{-2} 
  (\dot{x}-F\left( x\right))\right\|^2+ \operatorname{div} F\left(x\right)  .
\end{equation}

The stochastic differential equation \eqref{paticle system1} driven by high Vega Brown noise satisfies the following optimal control problems: 
\begin{equation}\label{5.19}
\begin{cases}
\underset{\theta \in \Theta }{\operatorname{min}} & J^N(y,\dot{y})=\int_0^T L^{N}( y(t), \dot{y}(t)) \mathrm{d} t+\Phi\left(y(T)\right), \\ 
\text{ s.t } & \dot{y}(t)=F(y(t)) +\sigma \theta(t).
 \end{cases}
\end{equation}
Here $\Theta$ is the set of all $\mathbb{R}^{Nd}$-valued control functions and the terminal cost function $\Phi$ is a real valued deterministic function. 
According to definition \ref{Definition 5.9}, the above deterministic optimal control problem satisfies the Pontryagin maximum principle. The most important equation in Pontryagin's maximum principle is the maximum condition. The optimality condition of the optimal control $\theta^{N}$corresponding to the optimal control problem \eqref{5.19} should satisfy the following formula:
\begin{equation}
\boldsymbol{Q}_N\left(\boldsymbol{\theta}^N\right)_t:=\frac{1}{N} \sum_{i=1}^N \nabla_\theta H\left(x_t^{\boldsymbol{\theta}^N, i}, p_t^{\boldsymbol{\theta}^N, i}, \theta_t^N\right)=0
\end{equation}
We call $\boldsymbol{Q}_N\left(\boldsymbol{\theta}^N\right)_t$the equation satisfied by the optimal control $\theta^{N}$in a random interacting particle system \eqref{paticle system}.

The numerical algorithm for solving the most probable transition pathway of a particle system is difficult due to the limitation of dimensionality. Next, we hope to establish the correspondence between the maximum possible migration orbit of a particle system and its corresponding average field limit system.

\subsection{The Most Probable Transition Pathway  for Stochastic Dynamical Systems with Mean Field Limit McKean-Vlasov Equation}
Assume that $B=\left(B_t\right)_{0 \leq t\leq T}$ is the standard $k$ dimensional Brown motion defined on the probability space $(\Omega, \mathcal{F}, \mathbb{P})$, $\mathbb{F}=\left(\mathcal{F}_t\right)_{0 \leq t \leq T}$ is its natural $\sigma $- algebra. For each random variable/vector or random process $X$, we use $\mu_X$ to represent the distribution of $X$, Use $\mathscr{P}_2\left(\mathbb{R}^{\mathrm{d}}\right)$to represent the complete, divisible metric space under the Wasserstein metric, and have $\mu \in \mathscr{P}_2$.

According to the work of Sznitman \cite{burkholder1991topics}, considering the additive Brown noise drive, the McKean-Vlasov stochastic differential equation satisfies the following form:
\begin{equation} \label{5.10}
\begin{aligned}
& d X_t=\int b\left(X_t, y\right) \mu_t(d y) d t + \sigma d B_t,  \quad 0 \leq t \leq T,\\
& X_{t=0}=X_0.
\end{aligned}
\end{equation} 
Here $\mu_t$ is the distribution of $X_t$, and $\sigma$represents the Brown noise intensity.

\begin{assumption}
The function $b: \mathbb{R}^d \times \mathbb{R}^d \to \mathbb{R}^d \to \mathbb{r}^d$ has a decomposition such that $b(X_t, y) = h(X_t) y$ holds.
\end{assumption}

Assuming the above assumptions are met, we have $\int b\left(X_t, y\right) \mu_t(d y) = \int h\left(X_t\right) y \mu_t(d y)$. For the constant $p \geq 1$, there is $h\left(X_t\right)\left(\int y \mu_t(d y)\right)^p=h\left(X_t\right)\left[E X_t\right]^p$, in particular, We take $p =1$, and further we can write the equation \eqref{5.10} as:
\begin{equation} \label{5.11}
\begin{aligned}
& d X_t=h\left(X_t\right)\left[E (X_t)\right] d t + \sigma d B_t,  \quad 0 \leq t \leq T,\\
& X_{t=0}=X_0.
\end{aligned}
\end{equation} 
Let $f(t, X_t) = h(t, X_t) [E(X_t)]$, where $f:[0,T] \times \mathbb{R}^{\mathrm{d}} \times \mathscr{P}_2\left(\mathbb{R}^{\mathrm{d}}\right) \rightarrow \mathbb{R}^{\mathrm{d}}$ is diffusion coefficient. Next, according to theorem \ref{Theorem 5.1}, and the above assumptions are true, we have the following statement.

Suppose in the system \eqref{5.11}, drift function $f(t, X_t) = h(t, X_t) [E(X_t)]$ satisfy assumptions  \textbf{H1-H3} in the paper \cite{liu2023onsager}, specifically $f \in C_b^{1, 2,(1,1)}\left([0,T] \times \mathbb{R}^{\mathrm{d}} \times \mathscr{P}_2\left(\mathbb{R}^{\mathrm{d}}\right)\right)$. Then for the $L^2\left([0,1], \mathbb{R}^{\mathrm{d}}\right)$ norm, the Onager-Machlup functional is:
\begin{equation} \label{MVSDEOM}
S^{OM}_{T}\left(t, \phi, \dot{\phi}\right)=\frac{1}{2}\int_0^T \left[\rm {B}^{-1}
\left|\dot{\phi}_t-f\left(t, \phi_t, \mu_{\phi_t}\right)\right|^2 + \operatorname{div}_x f\left(t, \phi_t,  \mu_{\phi_t}\right)  \right]
\mathrm{~d} t.
\end{equation}
Here $\rm B = \sigma \sigma^{*}$represents the diffusion coefficient matrix. Moreover, we define a Lagrangian as:
$$L\left(t, \phi, \dot{\phi}\right) = \frac{1}{2} \left[\rm {B}^{-1} 
  \left|\dot{\phi}_t-f\left(t, \phi_t, \mu_{\phi_t}\right)\right|^2\right]+\frac{1}{2} \operatorname{div}_x f\left(t, \phi_t, \mu_{\phi_t}\right),
  $$
and represents the divergence of  $\phi_t \in \mathbb{R}^{\mathrm{d}}$ as 
$$\operatorname{div}_x f=\sum_{i=1}^{\mathrm{d}} \partial_{x_i} f_i\left(t, \phi_t, \mu_{\phi_t}\right).
$$

Next, in order to better establish the maximum possible migration orbit approximation theorem for random interacting particle systems in the sense of mean field, we need to restate the equation \eqref{5.11} and the equation \eqref{MVSDEOM} as an optimal control problem. That is, it is assumed that there is a control $\theta \in \Theta$, which makes the following mean field optimal control problem valid.

\begin{equation}\label{5.13}
    \begin{cases}
    \underset{\theta \in \Theta }{\operatorname{min}} & \mathbb{E}_{\mu_0} \left[\frac{1}{2}\int_{0}^{T} [\theta^{2}+\operatorname{div}_{X}f(t, X_t, \mu_{X_t})]dt+g(X(T), \mu_{X(T)})\right], \quad  \mu_0 \in \mathscr{P}_2\left(\mathbb{R}^{\mathrm{d}}\right),\\ 
    \text{ s.t } & \dot{ X_t}=h\left(X_t\right)\left[E (X_t)\right] d t + \sigma \theta,  \quad 0 \leq t \leq T, \\
    &  {X}(0)=x_0, \quad {X}(T)=x_{T}.
    \end{cases}
\end{equation}

Next we need to introduce concepts and notations related to stochastic optimal control theory.

\begin{assumption}\label{Hypothesis 5.2} For the McKean-Vlasov stochastic differential equation (SDE) \eqref{5.11}, the following two assumptions are satisfied:

(A1) The function $t \in [0, T] \mapsto(f, \sigma) \in \mathbb{R}^d \times \mathbb{R}^ {d \times k}$is square-integrable;

(A2)  $\exists c>0,  \forall t \in[0, T],  \forall x, x^{\prime} \in \mathbb{R}^d, \forall \mu,  \mu^{\prime} \in \mathcal{P}_2\left(\mathbb{R}^d\right)$, has the following formula:
$$
\begin{aligned}
& \left|f(t, x, \mu)-f\left(t, x^{\prime}, \mu^{\prime}\right)\right|+\left|\sigma(t, x, \mu)-\sigma\left(t, x^{\prime}, \mu^{\prime}\right)\right| \\
& \quad \leq c\left[\left|x-x^{\prime}\right|+\mathbb{W}_2\left(\mu, \mu^{\prime}\right)\right].
\end{aligned}
$$
Here $\mathbb{W}_2\left(\mu, \mu^{\prime}\right)$represents the 2-Wasserstein distance.

\end{assumption}

\begin{remark}
When $p>1$, $p$-Wasserstein distance $\mathbb{W}_p\left(\mu, \mu^{\prime}\right)$ on $\mathcal{P}_p(E)$ are defined as follows:
$$
\begin{aligned} & \mathbb{W}_p\left(\mu, \mu^{\prime}\right)=\inf \left\{\left[\int_{\mathbb{E} \times \mathbb{E} }|x-y|^p \pi(d x,  d y)\right]^{1 / p} ; \right. \\ &\left.\pi \in \mathcal{P}_2(\mathbb{E}  \times \mathbb{E} ),   \mu   \text {and}  \mu^{\prime}  \text {are marginal probability measures}\right\}.\end{aligned}
$$
\end{remark}

\begin{theorem} \label{theorem 5.3} \textbf{(\textbf{Existence and Uniqueness Theorem of Equation \eqref{5.11} })} \quad Let $\theta \in \Theta$ is the control, and $\mathbb{A}$ is the set of  all controlled process$X^{\theta}_t$,  where $X^{\theta}_t \in \mathbb{H}^{2, k}$, $\mathbb{H}^{2, d}$ is the Hilbert space:
$$
\mathbb{H}^{2, d}:=\left\{Z \in \mathbb{H}^{0, d} ; \mathbb{E} \int_0^T\left|Z_s\right|^2 d s<+\infty\right\}.
$$
Here $\mathbb{H}^{0, d}$represents the set of all $\mathbb{R}^d$valued sequentially measurable processes on $[0, T]$. According to the hypothesis \ref{Hypothesis 5.2} (A1) and (A2), any $X^{\theta}_t \in \mathbb{A}$is satisfied
$$
\mathbb{E} \int_0^T\left[\left|f\left(t, X^{\theta}_t, \mu_t \right)\right|^2+\left|\sigma\left(t, X^{\theta}_t, \mu_t \right)\right|^2\right] d t<+\infty .
$$
Combined with the Lipschitz hypothesis (A2), this guarantees that for any controllable process $X^{\theta}_t \in \mathbb{A}$, there exists a unique solution to the equation \eqref{5.11} $X^{\theta^*}_t$, And this solution also satisfies that for every $p \in[1,2]$, there is
$$
\mathbb{E} \sup _{0 \leq t \leq T}\left|X^{\theta^*}_t\right|^p<+\infty.
$$
\end{theorem}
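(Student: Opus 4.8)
\emph{Proof proposal.} The plan is to obtain the solution by a Picard/fixed-point argument on the flow of marginal laws, treating the control $\theta$ as a frozen square-integrable drift perturbation. First I would rewrite the controlled state equation appearing in \eqref{5.13} as $dX_t = \tilde f(t,X_t,\mu_t)\,dt + \sigma\,dB_t$ with $\tilde f(t,x,\mu):= f(t,x,\mu) + \sigma\theta_t$; since $\theta\in\Theta\subset\mathbb{H}^{2,k}$, Assumption \ref{Hypothesis 5.2}(A1) still gives $t\mapsto\tilde f(t,\cdot,\cdot)$ square-integrable at a reference argument and (A2) keeps $\tilde f$ Lipschitz in $(x,\mu)$ uniformly in $t$. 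As a preliminary step I would record the a priori bound stated in the theorem: combining the Lipschitz estimate (A2) with square-integrability of $t\mapsto f(t,0,\delta_0)$ (from (A1)) yields the linear-growth bound $|f(t,x,\mu)|\le|f(t,0,\delta_0)|+c(|x|+\mathbb{W}_2(\mu,\delta_0))$ together with $\mathbb{W}_2(\mu,\delta_0)^2=\mu(|\cdot|^2)$, so for any $X^\theta\in\mathbb{A}=\mathbb{H}^{2,k}$ we get $\mathbb{E}\int_0^T[|f(t,X^\theta_t,\mu_t)|^2+|\sigma(t,X^\theta_t,\mu_t)|^2]\,dt<+\infty$.

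Next I would set up the contraction. Let $\mathbb{S}^{2,d}$ be the Banach space of continuous $\mathbb{F}$-adapted $\mathbb{R}^d$-valued processes $Y$ with $\|Y\|^2:=\mathbb{E}\sup_{0\le t\le T}|Y_t|^2<\infty$. Given $Y\in\mathbb{S}^{2,d}$, set $\mu^Y_t:=\mathscr{L}_{Y_t}$; because $(x,\mu)\mapsto\tilde f(t,x,\mu^Y_t)$ is genuinely Lipschitz in $x$ and $t\mapsto\tilde f$ is square-integrable, classical It\^o theory gives a unique strong solution $\Gamma(Y)$ of $dX_t=\tilde f(t,X_t,\mu^Y_t)\,dt+\sigma\,dB_t$ with the prescribed $X_0$, and the Burkholder--Davis--Gundy inequality with Gronwall's lemma yields $\Gamma(Y)\in\mathbb{S}^{2,d}$ and $\|\Gamma(Y)\|^2\le C(1+\mathbb{E}|X_0|^2+\mathbb{E}\int_0^T|\theta_t|^2\,dt)$. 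For two inputs $Y,Y'$ the difference $\Gamma(Y)-\Gamma(Y')$ solves an equation whose drift (and diffusion term, if $\sigma$ is not constant) is controlled by $c[\,|\Gamma(Y)_s-\Gamma(Y')_s|+\mathbb{W}_2(\mu^Y_s,\mu^{Y'}_s)\,]$; here the crucial ingredient is the coupling bound $\mathbb{W}_2(\mathscr{L}_{Y_s},\mathscr{L}_{Y'_s})\le(\mathbb{E}|Y_s-Y'_s|^2)^{1/2}$ recorded in the Remark following the definition of the Wasserstein metric, which lets one close the estimate back onto $\mathbb{S}^{2,d}$. Cauchy--Schwarz on the drift integral (and BDG on the stochastic integral) followed by Gronwall gives $\mathbb{E}\sup_{s\le t}|\Gamma(Y)_s-\Gamma(Y')_s|^2\le C\int_0^t\mathbb{E}\sup_{u\le s}|Y_u-Y'_u|^2\,ds$; iterating $n$ times produces the factor $(CT)^n/n!$, so $\Gamma^n$ is a contraction on $\mathbb{S}^{2,d}$ for $n$ large, and the Banach fixed-point theorem yields the unique fixed point $X^{\theta^*}$, which is precisely the unique strong solution of \eqref{5.11} under the control $\theta$. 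Uniqueness in the original class is automatic, since any solution is a fixed point of $\Gamma$.

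Finally, for the moment bound: $\mathbb{E}\sup_{0\le t\le T}|X^{\theta^*}_t|^2<+\infty$ is already contained in the previous step, and for $p\in[1,2)$ it follows from $|x|^p\le1+|x|^2$, whence $\mathbb{E}\sup_{0\le t\le T}|X^{\theta^*}_t|^p\le1+\mathbb{E}\sup_{0\le t\le T}|X^{\theta^*}_t|^2<+\infty$. The main obstacle, and the only genuine departure from the textbook Lipschitz-SDE proof, is the McKean--Vlasov coupling of the drift through $\mu_t$: one cannot run a naive Gronwall argument on $\mathbb{E}\sup|X_t-X'_t|^2$ without first dominating $\mathbb{W}_2(\mu_t,\mu'_t)$ by the same quantity, and it is exactly the inequality $\mathbb{W}_2(\mathscr{L}_X,\mathscr{L}_Y)\le(\mathbb{E}|X-Y|^2)^{1/2}$ together with iterating the map (rather than hoping $\Gamma$ itself contracts for arbitrary $T$) that makes the argument work; a secondary technical point is verifying that the additive control term $\sigma\theta_t$ preserves (A1)--(A2), which is immediate from $\theta\in\mathbb{H}^{2,k}$.
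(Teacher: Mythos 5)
Your proposal is essentially correct, but note that the paper does not actually prove Theorem \ref{theorem 5.3}: its ``proof'' consists of a citation to Jourdain et al.\ and Sznitman, and the argument you wrote out is precisely the standard one carried out in those references. Treating the control as a frozen square-integrable inhomogeneous perturbation $\sigma\theta_t$, establishing the a priori integrability of $f$ and $\sigma$ along any admissible process via the linear-growth consequence of (A1)--(A2), and then running Sznitman's fixed-point scheme --- freeze the flow of marginal laws, solve the resulting classical Lipschitz SDE, dominate $\mathbb{W}_2(\mathscr{L}_{Y_s},\mathscr{L}_{Y'_s})$ by $(\mathbb{E}|Y_s-Y'_s|^2)^{1/2}$, close the Gronwall estimate in $\mathbb{S}^{2,d}$, and iterate the map to get the $(CT)^n/n!$ contraction --- is exactly the textbook McKean--Vlasov existence and uniqueness argument, and your reduction of the $p\in[1,2]$ moment bound to the $p=2$ case is fine. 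The only points worth stating explicitly if you write this up are (i) that the frozen-law drift is random (through $\theta_t$) but $x$-independent in that term, so the classical strong existence result with progressively measurable inhomogeneities applies, and (ii) the interpretation of (A1) as square-integrability of $t\mapsto(f,\sigma)(t,x_0,\delta_{x_0})$ at a reference point, which is what makes the linear-growth bound legitimate; neither is a gap, just bookkeeping the paper leaves implicit. In short, you have supplied the proof that the paper outsources to the literature, by the same route the literature takes.
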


\begin{proof}
About the proof, please see, such as literature \cite {jourdain2007nonlinear, burkholder1991topics}.
\end{proof}

Next, we consider the problem of the maximum possible migration orbit corresponding to the equation \eqref{5.11}. According to the Onager -Machlup action functional \eqref{MVSDEOM}, Bringing the Lagrange $L (t, \phi, \dot{\phi})$into the mean field optimal control problem \eqref{5.13}, there is the following minimization problem:
\begin{equation}\label{5.14}
\underset{\theta \in \Theta }{\operatorname{min}} \quad J(\theta)=\mathbb{E}_{\mu_0}\left\{\int_0^T L\left(t, \phi, \dot{\phi}, \theta \right) d t+g\left(X_T, \mu_{X_T}\right)\right\}.
\end{equation}
Here the running cost function $L$is a real valued deterministic function defined on $[0, T] \times \mathbb{R}^d\times \mathcal{P}_2\left(\mathbb{R}^d\right)$, The terminal cost function $g$is also a real valued deterministic function, defined on $\mathbb{R}^d \times \mathcal{P}_2\left(\mathbb{R}^d\right)$.

\subsection{Stochastic Pontryagin's Maximum Principle for McKean-Vlasov Stochastic Differential Equations}
\begin{definition} \label{Definition 5.7}(\textbf{Joint Differentiability} \cite{a2cd2729-9bcf-3536-aa3b-80b81a878489}) \quad Consider the function  $h: \mathbb{R}^d \times \mathcal{P}_2\left(\mathbb{R}^d\right) \ni(x, \mu) \rightarrow h(x, \mu) \in \mathbb{R}$. If lift function $\tilde{h}: \mathbb{R}^d \times L^2\left(\tilde{\Omega} ; \mathbb{R}^d\right) \ni(x, \tilde{X}) \mapsto h\left(x, \tilde{\mathbb{P}}_{\tilde{X}}\right)$ is jointly differentiable, then $h$is jointly differentiable. Next define the partial derivatives of $x$ and $\mu$ as:
$$
\mathbb{R}^d \times \mathcal{P}_2\left(\mathbb{R}^d\right) \ni (x, \mu) \mapsto \partial_x h(x, \mu),
$$
$$
\mathbb{R}^d \times \mathcal{P}_2\left(\mathbb{R}^d\right) \ni(x, \mu) \mapsto \partial_\mu h(x, \mu)(\cdot) \in L^2\left(\mathbb{R}^d, \mu\right).
$$
Thus, the partial derivative of the function $\tilde{h}$in the direction $\tilde{X}$ Fr\'{e}chet is
$$
L^2\left(\tilde{\Omega} ; \mathbb{R}^d\right) \ni(x, \tilde{X}) \mapsto D_{\tilde{X}} \tilde{h}(x, \tilde{X})=\partial_\mu h(x, \tilde{\mathbb{P}} \tilde{X})(\tilde{X}) \in L^2\left(\tilde{\Omega} ; \mathbb{R}^d\right).
$$    
\end{definition}
\begin{remark}
We often use the fact that joint continuous differentiability in two parameters corresponds to the joint continuity of the divergentibility of each of the two parameters and the partial derivatives. Here, the joint continuity of $\partial_x h$is understood as the joint continuity with respect to the Euclidean distance on $\mathbb{R}^d$and the Wasserstein distance on $\mathcal{P}_2\left(\mathbb{R}^d\right)$.
 The joint continuity of $\partial_\mu h$is understood to be the mapping from $\mathbb{R}^n \times L^2\left(\tilde{\Omega} ; \mathbb{R}^d\right)$  to $L^2\left(\tilde{\Omega} ; \mathbb{R}^d\right)$. That is the joint continuity of $(x, \tilde{X}) \mapsto \partial_\mu h\left(x, \tilde{\mu}_{\tilde{X}}\right)(\tilde{X})$.
 \end{remark}

 \begin{definition} (\textbf{Convex Function of Measure} \cite{a2cd2729-9bcf-3536-aa3b-80b81a878489}) \quad For a differentiable function $h$that satisfies the definition \ref{Definition 5.7}, If for all$\mu \in \mathcal{P}_2\left(\mathbb{R}^d\right)$ and $\mu^{\prime} \in \mathcal{P}_2\left(\mathbb{R}^d\right)$, we have:
 $$
h\left(\mu^{\prime}\right)-h(\mu)-\tilde{\mathbb{E}}\left[\partial_\mu h(\mu)(\tilde{X}) \cdot\left(\tilde{X}^{\prime}-\tilde{X}\right)\right] \geq 0
$$
Here $\tilde{X}$and $\tilde{X}^{\prime}$are square-integrable random variables with distributions $\mu$and $\mu^{\prime}$, then the function $h$is said to be convex.
\end{definition}
\begin{remark}
More generally, for functions $h$ that are jointly differentiable in the above sense, $\tilde{X}$ and $\tilde{X}^{\prime}$ are square integrable random variables with distributions $\mu$ and $\mu^{\prime}$. If for each  $(x, \mu) \in \mathbb{R}^n \times$ $\mathcal{P}_2\left(\mathbb{R}^d\right)$ and $\left(x^{\prime}, \mu^{\prime}\right) \in \mathbb{R}^n \times$ $\mathcal{P}_2\left(\mathbb{R}^d\right)$, we have 
$$
\begin{gathered}
h\left(x^{\prime}, \mu^{\prime}\right)-h(x, \mu)-\partial_x h(x, \mu) \cdot\left(x^{\prime}-x\right) \\
-\tilde{\mathbb{E}}\left[\partial_\mu h(x, \mu)(\tilde{X}) \cdot\left(\tilde{X}^{\prime}-\tilde{X}\right)\right] \geq 0.
\end{gathered}
$$
Then the function $h$ is said to be convex.
\end{remark}

\begin{definition}\label{Definition 5.9} (\textbf{Hamiltonian of random Pontryagin maximum principle}\cite{a2cd2729-9bcf-3536-aa3b-80b81a878489}) \quad The Hamiltonian of random Pontryagin maximum principle is defined as a function $H$, as follows:
$$
H(t, x, \mu, p, z, \theta)=f(t, x, \mu) \cdot p - L(t, x, \dot{x}, \theta).
$$
Here the dot symbol represents the inner product in Euclidean space. Since we need to compute the derivative of $H$with respect to its variable $\mu$, we consider the raised Hamiltonian $\tilde{H}$defined in the following way:
$$
\tilde{H}(t, x, \tilde{X}, p, \theta)=H(t, x, \mu, p, \theta).
$$
Here $\tilde{X}$is any random variable with a distribution of $\mu$, and we will use $\partial_\mu H(t, x, \mu_0, p, \theta)$to represent the derivative calculated against $\mu$at $\mu_0$, Where all other variables $t, x, p$and $\theta$remain the same.
\end{definition}

\begin{remark}
Here we emphasize that $\partial_\mu H\left(t, x, \mu_0, p, \theta\right)$is an element of $L^2\left(\mathbb{R}^d, \mu_0\right)$, We associate this with the function $\partial_\mu H\left(t, x, \mu_0, p, \theta\right)(\cdot): \mathbb{R}^d \ni \tilde{x} \mapsto \partial_\mu H\left(t, x, \mu_0, p, \theta\right)(\tilde{x})$is equated. It meets the following conditions:
$$
D \tilde{H}(t, x, \tilde{X}, p, \theta)=\partial_\mu H\left(t, x, \mu_0, p, \theta\right)(\tilde{X}),
$$
The above conditions hold almost everywhere in the sense of the measure $\tilde{\mu}$.
\end{remark}

\begin{definition} \label{Definition 5.10} (\textbf{Conjugate Equations of Stochastic Optimal Pontryagin Maximum Principle}) \quad For McKean-Vlasov Stochastic Differential Equation (SDE) \eqref{5.11} The drift function $f$and the diffusion coefficient $\sigma$satisfy the assumption \ref{Hypothesis 5.2} (A1)-(A2), and the assumption coefficients $f, \sigma$and the terminal cost function $g$in the equation \eqref{5.13} are jointly differentiable for $x$and $\mu$.  Then, given an acceptable control $\theta=\left(\theta_t\right)_{0 \leq t\ leq T} \in \Theta$, we denote the corresponding controlled state process by $X=X^\theta$. When the following conditions are satisfied:
$$
\begin{aligned}
\mathbb{E} \int_0^T\Big\{\Big|\partial_x f\big(t, X_t, \mu_{X_t}, \theta_t\big)\Big|^2&+\tilde{\mathbb{E}}\Big[\Big|\partial_\mu f\big(t, X_t, \mu_{X_t}, \theta_t\big)\big(\tilde{X}_t\big)\Big|^2\Big]\Big\} d t<+\infty, \\
\mathbb{E}\Big\{\Big|\partial_x g\big(X_T, \mu_{X_T}\big)\Big|^2&+\tilde{\mathbb{E}}\Big[\Big|\partial_\mu g\big(X_T, \mu_{X_T}\big)\big(\tilde{X}_T\big)\Big|^2\Big]\Big\}<+\infty,
\end{aligned}
$$
The conjugation process of $P_t$, which we call $X_t$, satisfies the following equations (which we call conjugate equations) :
$$
\begin{aligned}
d P_t= & -\partial_x H\left(t, X_t, \mu_{X_t}, P_t, \theta_t\right) d t \\
& -\tilde{\mathbb{E}}\left[\partial_\mu H\left(t, \tilde{X}_t, \mu_{X_t}, \tilde{P}_t,  \tilde{\theta}_t\right)\left(X_t\right)\right] d t. \\
\end{aligned}
$$
The $(\tilde{X}, \tilde{P},  \tilde{\theta})$  defined on  $L^2(\tilde{\Omega}, \tilde{\mathcal{F}}, \tilde{\mu})$  independent replication of ${X}, {P},  \theta)$, $\mathbb{E}$ is the expectations on $(\tilde{\Omega}, \tilde{\mathcal{F}}, \tilde{\mu})$.
\end{definition}

\section{Main conclusions and proofs}

With the important definitions and mathematical notation of the previous section, the main conclusions of this chapter are stated in this section. The first is about McKean-Vlasov randomness
The differential equation (SDE) \eqref{5.11} corresponds to the Pontryagin maximum principle for stochastic optimal control problems. Secondly, we derive the optimal control functional and approximation theorem of McKean-Vlasov SDE's Pontryagin maximum principle for random interacting particle systems.
\begin{theorem}\label{theorem 5.4} (\textbf{Existence and uniqueness of solutions to the mean field optimal control problem}) \quad In the mean field optimal control problem \eqref{5.13}, for the Hamiltonian $H$of the form \ref{Definition 5.9}, If we further assume that the Hamiltonian $H$is convex for the control $\theta$, we can accept that the control $\left(\theta^{*}_t\right)_{0 \leq t\ leq T} \in \Theta$is optimal, $\left(X_t^{\theta}\right)_{0 \leq t \leq T}$is the relevant optimal controlled state, $\left(P_t\right)_{0 \leq t\ leq T}$is the associated conjugation process satisfying the definition \ref{Definition 5.10}, then we have:
\begin{equation}
\begin{aligned}
& \forall \theta \in \Theta \quad H\left(t, X^{\theta}_t, \mu_{X_t}, P_t, \theta_t\right) \leq H\left(t, X_t, \mu_{X_t}, P_t,  \theta^{*}\right), \\
& d t \otimes d \mu \text {a.s.}
\end{aligned}
\end{equation}
\end{theorem}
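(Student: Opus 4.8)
The plan is to prove the stated inequality as the Pontryagin optimality (maximum) condition, by differentiating the cost functional \eqref{5.14} along convex perturbations of the optimal control, identifying the derivative through a duality with the adjoint process of Definition \ref{Definition 5.10}, and then using the convexity hypothesis on $H$ in the control to turn the resulting first-order condition into the global inequality. Throughout, Theorem \ref{theorem 5.3} gives, for each admissible control, a unique controlled state in $\mathbb{H}^{2,d}$ with finite moments, and the regularity $f\in C_b^{1,2,(1,1)}$ together with the Lipschitz hypothesis \ref{Hypothesis 5.2}(A2) justifies all the differentiations below. \textbf{Step 1 (convex variation and linearised equation).} Fix an arbitrary $\theta\in\Theta$; by convexity of $\Theta$, put $\theta^{\varepsilon}_t=\theta^{*}_t+\varepsilon(\theta_t-\theta^{*}_t)\in\Theta$ for $\varepsilon\in(0,1]$ and write $X^{\varepsilon}=X^{\theta^{\varepsilon}}$, $X=X^{\theta^{*}}$. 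Using the Lipschitz estimates of Theorem \ref{theorem 5.3} and Grönwall's inequality, I would show that $\varepsilon^{-1}(X^{\varepsilon}-X)\to V$ in $\mathbb{H}^{2,d}$ as $\varepsilon\downarrow 0$, where $V$ solves the linearised McKean--Vlasov equation
\begin{equation*}
dV_t=\Bigl[\partial_x f\cdot V_t+\tilde{\mathbb{E}}\bigl[\partial_\mu f(\tilde X_t)\,\tilde V_t\bigr]+\partial_\theta f\cdot(\theta_t-\theta^{*}_t)\Bigr]\,dt,\qquad V_0=0,
\end{equation*}
with all coefficients evaluated at $(t,X_t,\mu_{X_t},\theta^{*}_t)$ and $(\tilde X_t,\tilde V_t)$ an independent copy of $(X_t,V_t)$.

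\textbf{Step 2 (differentiating the cost and duality).} By the chain rule and dominated convergence,
\begin{equation*}
\frac{d}{d\varepsilon}\Bigl|_{\varepsilon=0}J(\theta^{\varepsilon})=\mathbb{E}\!\int_0^T\!\Bigl(\partial_x L\cdot V_t+\tilde{\mathbb{E}}[\partial_\mu L(\tilde X_t)\tilde V_t]+\partial_\theta L\cdot(\theta_t-\theta^{*}_t)\Bigr)dt+\mathbb{E}\Bigl(\partial_x g\cdot V_T+\tilde{\mathbb{E}}[\partial_\mu g(\tilde X_T)\tilde V_T]\Bigr),
\end{equation*}
and optimality of $\theta^{*}$ forces this to be $\ge 0$. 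I would then apply the fundamental theorem of calculus to $t\mapsto\langle P_t,V_t\rangle$, insert the adjoint dynamics of Definition \ref{Definition 5.10} with the associated terminal condition for $P_T$ in terms of the derivatives of $g$, and exploit the mean-field symmetrisation identity
\begin{equation*}
\mathbb{E}\tilde{\mathbb{E}}\bigl[P_t\cdot\partial_\mu f(t,X_t,\mu_{X_t},\theta^{*}_t)(\tilde X_t)\,\tilde V_t\bigr]=\mathbb{E}\tilde{\mathbb{E}}\bigl[\tilde P_t\cdot\partial_\mu f(t,\tilde X_t,\mu_{X_t},\tilde\theta^{*}_t)(X_t)\,V_t\bigr],
\end{equation*}
obtained by swapping the generic variable with its independent copy under the double expectation. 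All terms containing $V$ cancel, and since $H=f\cdot p-L$ one is left with $\frac{d}{d\varepsilon}\bigl|_{\varepsilon=0}J(\theta^{\varepsilon})=-\,\mathbb{E}\int_0^T\partial_\theta H(t,X_t,\mu_{X_t},P_t,\theta^{*}_t)\cdot(\theta_t-\theta^{*}_t)\,dt$, whence $\mathbb{E}\int_0^T\partial_\theta H(t,X_t,\mu_{X_t},P_t,\theta^{*}_t)\cdot(\theta_t-\theta^{*}_t)\,dt\le 0$ for every $\theta\in\Theta$.

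\textbf{Step 3 (pointwise condition and convexity).} Since this holds for every admissible $\theta$ and admissible controls can be localised in $(t,\omega)$, a measurable-selection / Lebesgue-point argument yields $\partial_\theta H(t,X_t,\mu_{X_t},P_t,\theta^{*}_t)\cdot(a-\theta^{*}_t)\le 0$ for every admissible value $a$, $dt\otimes d\mu$-a.s. Invoking the convexity hypothesis on $H$ in the control variable --- equivalently, the quadratic running Lagrangian $\tfrac12\theta^2$ making $\theta\mapsto H$ attain its maximum precisely at the stationary point --- this first-order inequality upgrades to $H(t,X_t,\mu_{X_t},P_t,a)\le H(t,X_t,\mu_{X_t},P_t,\theta^{*}_t)$ for all admissible $a$, $dt\otimes d\mu$-a.s.; taking $a=\theta_t$ for an arbitrary admissible $\theta$ gives exactly the asserted statement.

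\textbf{Main obstacle.} The delicate step is the duality in Step 2: pairing the adjoint process with the variational process when the coefficients depend on the law through \emph{both} a spatial and a measure derivative, making the interchange of the generic variable with its independent copy rigorous under the double expectation, and keeping the sign conventions of Definitions \ref{Definition 5.9}--\ref{Definition 5.10} (notably the unstated terminal condition of $P$) consistent so that the first-order condition really characterises the \emph{maximum} of $H$ rather than a minimum. Step 1 is comparatively routine via Grönwall but still needs the uniform moment bounds from the $C_b^{1,2,(1,1)}$ hypothesis to pass to the limit in the difference quotient.
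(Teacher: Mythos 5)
Your proposal is correct and follows essentially the same route as the paper: a convex perturbation $\theta^{\varepsilon}=\theta^{*}+\varepsilon(\theta-\theta^{*})$, the first-order optimality condition expressed through $\partial_\theta H$ along $(X_t,\mu_{X_t},P_t,\theta^{*}_t)$, an upgrade to the global Hamiltonian inequality via the convexity/concavity structure of $H$ in $\theta$, and a localization over progressively measurable sets to pass from the integrated to the $dt\otimes d\mu$-a.s.\ pointwise statement. The only differences are in your favor: you actually derive the Gateaux-derivative identity through the variational process and its duality with the adjoint of Definition \ref{Definition 5.10} (including the measure-derivative symmetrization and the terminal condition for $P_T$), which the paper simply asserts, and you keep the signs coherent --- noting that it is really concavity of $\theta\mapsto H$ (from the quadratic running cost) that turns the stationarity condition into a maximum --- whereas the paper's displayed inequalities mix the two directions.
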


\begin{proof}
Since the control set $\Theta$is convex, given another control $\theta_1 \in \Theta$, We can choose to perturb $\theta^{\varepsilon}=\theta^*_t+\varepsilon\left(\theta_1-\theta^*_t\right)$, for $0 \leq \varepsilon\ leq 1$, It still belongs to $\Theta$. Since $\theta^*_t$is optimal, we have inequalities
$$
\left.\frac{d}{d \varepsilon} J(\theta_t^*+\varepsilon(\theta_1-\theta_t^*))\right|_{\varepsilon=0}=\mathbb{E} \int_0^T\left[\partial_{\theta_t} H\left(t, X_t, \mu_{X_t}, P_t,  \theta^{*}_t\right) \cdot\left(\theta_1-\theta^*_t\right)\right] d t \geq 0 .
$$
Since the Hamiltonian $H$is convex with respect to the control variable $\theta \in \Theta$, for all $\theta_1 \in \Theta$we conclude:
$$
\mathbb{E} \int_0^T\left[H\left(t, X_t, \mu_{X_t}, P_t, \theta_1\right)-H\left(t, X_t, \mu_{X_t}, P_t, \theta^{*}_t\right)\right] d t \geq 0,
$$
Now, if for a definite control $\theta_t^{*} \in \Theta$, we select $\theta_1$as follows:
$$
\theta_1(\omega)= \begin{cases}\theta_t^{*}, & \text { if }(t, \omega) \in C, \\ \theta_t(\omega), \end{cases}
$$
For any progressive measurable set $C \subset[0, T]\times \Omega$, 
$$
\mathbb{E} \int_0^T \mathbf{1}_C\left[H\left(t, X_t, \mu_{X_t}, P_t,  \theta^{*}_t\right)-H\left(t, X_t, \mu_{X_t}, P_t,  \theta_t\right)\right] d t \geq 0,
$$
then 
$$
H\left(t, X_t, \mu_{X_t}, P_t, \theta_t^{*}\right)-H\left(t, X_t, \mu_{X_t}, P_t,  \theta_t\right) \geq 0, \quad d t \otimes d \mu \text {-a.s.}
$$
At this point, we have completed the proof of the theorem that for the average field optimal control problem \eqref{5.13}, the solution of the random Pontryagin maximum principle exists and is unique. in other words, there is a unique optimal control $\theta_t^{*} \in \Theta$that minimizes the cost function $J$. 
\end{proof}

With the above theorems, the robustness of the mean field optimal control problem is guaranteed. Carmona et al. \cite{a2cd2729-9bcf-3536-aa3b-80b81a878489} derived the Pontryagin maximum condition for stochastic optimal control problems. We have the following lemma.

\begin{lemma} (\cite{a2cd2729-9bcf-3536-aa3b-80b81a878489}) Assuming that the other conditions remain consistent with theorem \ref{theorem 5.4}, but does not require the control set $\Theta$to be convex, nor does it require $H$to be convex with respect to $\theta$. Assuming that the acceptable control $\left(\theta^{*}_t\right)_{0 \leq t\ leq T} \in \Theta$is optimal, $\left(X^{\theta}_t\right)_{0 \leq t\ leq T}$is the related optimal controlled state, and $\left(P_t\right)_{0 \leq t\ leq T}$is the related conjugate process. So we have
$$
\nabla_\theta H\left(t, X_t, \mu_{X_t}, P_t, \theta^{*}_t\right)=0, \quad d t \otimes d \mu \text {a.s.}
$$    
\end{lemma}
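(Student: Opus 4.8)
The plan is to derive $\nabla_\theta H=0$ as a first‑order (stationarity) necessary condition, which is exactly the interior‑optimum version of the Pontryagin maximum condition: since the claimed identity is pointwise in $\theta$, it suffices to perturb the optimal control by an arbitrary admissible direction, differentiate the cost, and localize. Concretely, fix a bounded progressively measurable process $\eta=(\eta_t)_{0\le t\le T}$ with values in $\mathbb{R}^{k}$, and set $\theta^{\varepsilon}_t=\theta^{*}_t+\varepsilon\eta_t$, which stays in $\Theta$ for $\varepsilon$ in a two‑sided neighbourhood of $0$ (using that the optimum is interior / $\Theta$ is open, which is the situation in which $\nabla_\theta H=0$ can be expected). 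First I would show that $\varepsilon\mapsto X^{\theta^{\varepsilon}}$ is differentiable at $\varepsilon=0$ in $\mathbb{H}^{2,d}$ with derivative $Y=(Y_t)$ solving the linearized McKean--Vlasov equation
$$dY_t=\Big(\partial_x f(t,X_t,\mu_{X_t},\theta^{*}_t)\,Y_t+\tilde{\mathbb{E}}\big[\partial_\mu f(t,X_t,\mu_{X_t},\theta^{*}_t)(\tilde X_t)\,\tilde Y_t\big]+\partial_\theta f(t,X_t,\mu_{X_t},\theta^{*}_t)\,\eta_t\Big)\,dt,\qquad Y_0=0,$$
the required $\mathbb{H}^{2,d}$ bounds and the convergence $Y^{\varepsilon}\to Y$ following from Assumption \ref{Hypothesis 5.2}(A1)--(A2), the $C_b^{1,2,(1,1)}$ regularity of the coefficients, and Gronwall's inequality.

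Next I would compute $\tfrac{d}{d\varepsilon}J(\theta^{\varepsilon})\big|_{\varepsilon=0}$, which splits into the direct running‑cost term $\mathbb{E}\int_0^T\partial_\theta L(t,X_t,\theta^{*}_t)\cdot\eta_t\,dt$, the running term through $Y$, namely $\mathbb{E}\int_0^T\big(\partial_x L\cdot Y_t+\tilde{\mathbb{E}}[\partial_\mu L(\cdot)(\tilde X_t)\tilde Y_t]\big)dt$, and the terminal term $\mathbb{E}\big[\partial_x g(X_T,\mu_{X_T})\cdot Y_T+\tilde{\mathbb{E}}[\partial_\mu g(X_T,\mu_{X_T})(\tilde X_T)\tilde Y_T]\big]$. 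Then I would apply the product rule (no Itô correction, since the controlled dynamics in \eqref{5.13} and the conjugate equation of Definition \ref{Definition 5.10} carry no martingale part) to $t\mapsto P_t\cdot Y_t$, insert the conjugate equation and its terminal condition, and integrate on $[0,T]$. The decisive point is that all $\partial_x f$‑ and $\partial_\mu f$‑contributions cancel against $\partial_x H$ and $\tilde{\mathbb{E}}[\partial_\mu H(\cdot)(X_t)]$, the mean‑field cross terms disappearing after a Fubini interchange that swaps the roles of the generic copy $(\Omega,X,P,\theta^{*})$ and the independent copy $(\tilde\Omega,\tilde X,\tilde P,\tilde\theta^{*})$; recalling $H=f\cdot p-L$, every occurrence of $Y$ is eliminated and one is left with
$$\frac{d}{d\varepsilon}J(\theta^{\varepsilon})\Big|_{\varepsilon=0}=\mathbb{E}\int_0^T \nabla_\theta H\big(t,X_t,\mu_{X_t},P_t,\theta^{*}_t\big)\cdot\eta_t\,dt,$$
up to an overall sign dictated by the paper's convention, consistent with the expression already used in the proof of Theorem \ref{theorem 5.4}.

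Since $\theta^{*}$ is optimal and $\theta^{\varepsilon}$ is admissible for $\varepsilon$ on both sides of $0$, this derivative vanishes, so $\mathbb{E}\int_0^T \nabla_\theta H(t,X_t,\mu_{X_t},P_t,\theta^{*}_t)\cdot\eta_t\,dt=0$ for every bounded progressively measurable $\eta$. Localizing exactly as in the last step of the proof of Theorem \ref{theorem 5.4} — taking $\eta_t(\omega)=\mathbf{1}_C(t,\omega)\,v$ for an arbitrary progressively measurable $C\subset[0,T]\times\Omega$ and arbitrary $v\in\mathbb{R}^{k}$ — yields $\nabla_\theta H(t,X_t,\mu_{X_t},P_t,\theta^{*}_t)=0$, $dt\otimes d\mu$-a.s. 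The main obstacle is the duality/Fubini cancellation of the McKean--Vlasov terms in $d(P_t\cdot Y_t)$: one must rigorously justify interchanging $\mathbb{E}$ with $\tilde{\mathbb{E}}$ and the identity $\mathbb{E}\big[P_t\cdot\tilde{\mathbb{E}}[\partial_\mu f(t,X_t,\mu_{X_t},\theta^{*}_t)(\tilde X_t)\tilde Y_t]\big]=\tilde{\mathbb{E}}\big[Y_t\cdot\mathbb{E}[\partial_\mu f(t,\tilde X_t,\mu_{X_t},\tilde\theta^{*}_t)(X_t)\cdot\tilde P_t]\big]$, which rests on the square‑integrability conditions imposed in Definition \ref{Definition 5.10} and the independence of the tilde‑copy; the remaining items (differentiability of $\varepsilon\mapsto J(\theta^{\varepsilon})$, uniform integrability for the limit $\varepsilon\to0$) are routine under the standing hypotheses.
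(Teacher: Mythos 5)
Your argument is correct and is essentially the standard route this lemma rests on: the paper itself supplies no proof and defers to the cited reference, where the necessary condition is obtained exactly as you do — linearized (variational) McKean--Vlasov process $Y$, duality with the conjugate process via the product rule on $P_t\cdot Y_t$ (no Itô correction needed here since the diffusion is constant and uncontrolled), the Fubini exchange with the independent tilde-copy to cancel the $\partial_\mu f$ terms against $\tilde{\mathbb{E}}[\partial_\mu H(\cdot)(X_t)]$, and localization over progressively measurable sets as in the proof of Theorem \ref{theorem 5.4}. Your explicit caveat that the two-sided perturbation requires $\theta^{*}$ to be interior (or $\Theta$ open) is the right reading rather than a defect: without some such condition the stationarity form $\nabla_\theta H=0$ can fail at boundary optima, and the non-convex case in the cited literature is otherwise handled by spike variations leading to the full maximum condition, from which the gradient identity again follows only at interior extrema.
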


According to the above lemma, the optimal control problem in the form of equation \eqref{5.13} can be solved by solving the random Pontryagin maximum principle. In the sense of mean field, the optimal control corresponding to equation \eqref{5.13}
\begin{equation} \label{5.22}
\boldsymbol{Q}\left(\boldsymbol{\theta}^*\right)_t:=\mathbb{E}_{\mu_0} \nabla_\theta H\left(x_t^{\boldsymbol{\theta}^*}, p_t^{\boldsymbol{\theta}^*}, \theta_t^*\right)=0.
\end{equation}
Here, the Hamiltonian function $H$is shown in definition \ref{Definition 5.9}, $p_t$is called conjugate variable, $L(t, x, \dot{x}, \theta)$is the integrand of the Onager -Machlup action function \eqref{MVSDEOM} corresponding to the McKean-Vlasov stochastic differential equation (SDE).

We say that the equation \eqref{5.22} is the equation satisfied by the optimal control $\theta^{*}$of the mean field limit McKean-Vlasov SDE \eqref{5.11}. Next, we consider the possible correspondence between the most probable transition pathways of the stochastic interacting particle system \eqref{paticle system} and the McKean-Vlasov stochastic dynamical system \eqref{5.11} in the functional sense of Onager-Machlup action.

\begin{definition} \label{Definition 5.11}(\textbf{Stable Mapping}\cite{han2019mean}) \quad For the mapping $F: U \rightarrow V$, constant $\rho>0$  and $x \in U $, make $S_\rho(x):=\left\{y \in U:\|x-y\|_U \leq \rho\right\} $. If there is a constant $K_\rho>0$  for all $y, z \in S_\rho(x)$, there is
$$
\|y-z\|_U \leq K_\rho\|F(y)-F(z)\|_V.
$$
We say that the mapping $F$ is stable on $S_\rho(x)$.
\end{definition}

\begin{remark}
If $F$ is stable on $S_\rho(x)$, then obviously it has at most one solution on $S_\rho(x)$ for $F=0$. If $D F\left(x^*\right)$ exists, then it is non-singular.
The following statement establishes a stronger version of this: if $D F(x)$ exists for any $x \in S_\rho\left(x^*\right)$, then it must be non-singular.
\end{remark}

Supppse  $(\Theta, \mathcal{F}, \mathbb{P})$ is a probability space, $\left\{F_N(\theta): N \geq 1, \theta \in \Theta\right\}$ is the family of mappings from $\Theta$to $\mathbb{R}$such that for each $x$, $\theta \mapsto F_N (\theta) (x)$ is $\mathcal{F} $- 
 measurable.

\begin{assumption} \label{Hypothesis 5.3}Now let's make the following assumptions about mapping $Q$:

(A1)(stability) \quad exists $\theta^* \in \Theta$ such that $Q\left(\theta^*\right)=0$, and for some $\rho>0$, $Q$ is stable on $S_\rho\left(\theta^*\right)$.

(A2)(uniform convergence in probabilities) \quad for all $N \geq 1$, for all $\theta \in S_\rho\left(\theta^*\right)$, $D Q(\theta)$ and $D Q_N(\theta)$ exist almost everywhere, and
$$
\begin{aligned}
& \mathbb{P}\left[\left\|Q(\theta)-Q_N(\theta)\right\|_\mathbb{R}\geq s\right] \leq r_1(N, s), \\
& \mathbb{P}\left[\left\|D Q(\theta)-D Q_N(\theta)\right\|_\mathbb{R} \geq s\right] \leq r_2(N, s),
\end{aligned}
$$
The above formula holds for some real-valued functions $r_1, r_2$, and satisfies that when $N \rightarrow \infty$, $r_1(N, s), r_2(N, s) \rightarrow 0$.

(A3)(uniform Lipschitz derivative)\quad exists $K_L>0$, such that for all $\theta_1, \theta_2 \in S_\rho\left(\theta^*\right)$,
$$
\left\|D Q_N(\theta_1)-D Q_N(\theta_2)\right\|_\mathbb{R}\leq K_L\|\theta_1-\theta_2\|_\Theta,  \quad \mathbb{P} \text {- a.s. }
$$
\end{assumption}

\begin{theorem} \label{Theorem 5.5}(\textbf{Correspondence relation of the most probable transition pathways for a stochastic interacting particles in the sense of mean field}) \quad If the assumption \ref{Hypothesis 5.3} (A1)-(A3) holds. So, there is a constant $s_0>0$, for each $s \in\left(0, s_0\right]$ and $N \geq 1$, there exists a measurable set $\theta_N(s) \subset \Theta$, there exist two real-valued functions $r_1, r_2$, And satisfy that when $N \rightarrow \infty$, $r_1(N, s), r_2(N, s) \rightarrow 0$. Make $\ \ mathbb {P} \ left [\ theta_N (s) \ right] \ geq $$1 - r_1 (N, s) - r_2 (N, s) $, and for each $\ theta \ \ theta_N in $(s), with the following error:
$$
\left\|Q_N\left(\theta\right)-Q(\theta^*)\right\|<s .
$$
In addition, $D Q_N(\theta)$is non-singular and satisfies the following relation:
$$
\left\|D Q_N(\theta)^{-1}\right\|_\Theta\leq 2\left\|D Q\left(\theta^*\right)^{-1}\right\|_\Theta .
$$
In particular, we say $D Q_N (\theta) $ is stable on $S_{\rho_0}\left(\theta^*\right)$, including constant $\rho_0 $ meet
$$\rho_0 \leq \min \left(\rho, \frac{1}{4}\left(K_L\left\|D Q\left(\theta^*\right)^{-1}\right\|_\Theta\right)^{-1}\right).$$
Here $K_{\rho_0}$is the stability constant that satisfies the definition \ref{Definition 5.11}, the specific value is $K_{\rho_0}=4\left\|D Q\left(\theta^*\right)^{-1}\right\|_\Theta$.
\end{theorem}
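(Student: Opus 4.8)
The plan is to run a quantitative perturbation (Neumann--series) argument in the spirit of the stability analysis behind \cite{han2019mean}: the mean--field map $Q$ is well behaved at $\theta^{*}$ because $Q(\theta^{*})=0$ and, by assumption \ref{Hypothesis 5.3} (A1) together with the remark after Definition \ref{Definition 5.11}, $A:=DQ(\theta^{*})$ is non-singular; one then transfers this to the empirical map $Q_{N}$ on a high--probability event where $Q_{N}$ and $DQ_{N}$ are uniformly close to $Q$ and $DQ$. Set $M:=\|DQ(\theta^{*})^{-1}\|_{\Theta}<\infty$ and fix the constants at the outset:
$$\rho_{0}:=\min\!\left(\rho,\ \tfrac14\big(K_{L}M\big)^{-1}\right),\qquad s_{0}:=\tfrac{1}{4M},$$
taking $s\in(0,s_{0}]$; note that $S_{\rho_{0}}(\theta^{*})$ is convex and, since $\rho_{0}\le\rho$, lies inside the region covered by (A2)--(A3).

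First I would build the favorable event. Applying (A2) at the single point $\theta=\theta^{*}\in S_{\rho}(\theta^{*})$ gives $\mathbb{P}\big[\|Q(\theta^{*})-Q_{N}(\theta^{*})\|_{\mathbb{R}}\ge s\big]\le r_{1}(N,s)$ and $\mathbb{P}\big[\|DQ(\theta^{*})-DQ_{N}(\theta^{*})\|_{\mathbb{R}}\ge s\big]\le r_{2}(N,s)$; let $\theta_{N}(s)$ be the intersection of the two complementary events, so that $\mathbb{P}[\theta_{N}(s)]\ge 1-r_{1}(N,s)-r_{2}(N,s)$ with $r_{1},r_{2}\to0$ as $N\to\infty$, and $\theta_{N}(s)$ is measurable since $\theta\mapsto Q_{N}(\theta)$ and $\theta\mapsto DQ_{N}(\theta)$ are $\mathcal{F}$--measurable. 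On this event the base point already satisfies $\|Q_{N}(\theta^{*})-Q(\theta^{*})\|_{\mathbb{R}}<s$ because $Q(\theta^{*})=0$.

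The core step is uniform invertibility of $DQ_{N}$ on the whole ball. Working on $\theta_{N}(s)$, for any $\theta\in S_{\rho_{0}}(\theta^{*})$ the triangle inequality, the uniform Lipschitz bound (A3), and the good event give
$$\|DQ_{N}(\theta)-A\|\ \le\ K_{L}\rho_{0}+s\ \le\ \tfrac{1}{4M}+\tfrac{1}{4M}\ =\ \tfrac{1}{2M},$$
so $\|A^{-1}\|\,\|DQ_{N}(\theta)-A\|\le\tfrac12<1$, and the Neumann series applied to $DQ_{N}(\theta)=A\big(I+A^{-1}(DQ_{N}(\theta)-A)\big)$ shows $DQ_{N}(\theta)$ is non-singular with $\|DQ_{N}(\theta)^{-1}\|_{\Theta}\le M/(1-\tfrac12)=2\|DQ(\theta^{*})^{-1}\|_{\Theta}$. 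Stability of $Q_{N}$ on $S_{\rho_{0}}(\theta^{*})$ then follows from the averaged--derivative representation: for $y,z\in S_{\rho_{0}}(\theta^{*})$ the segment $[z,y]$ stays in the ball by convexity, $Q_{N}(y)-Q_{N}(z)=\big(\int_{0}^{1}DQ_{N}(z+t(y-z))\,dt\big)(y-z)$, the averaged operator is within $\tfrac{1}{2M}$ of $A$ hence invertible with inverse norm $\le 2M$, and therefore $\|y-z\|_{\Theta}\le 2M\,\|Q_{N}(y)-Q_{N}(z)\|_{\mathbb{R}}\le 4\|DQ(\theta^{*})^{-1}\|_{\Theta}\,\|Q_{N}(y)-Q_{N}(z)\|_{\mathbb{R}}$, which is exactly Definition \ref{Definition 5.11} with $K_{\rho_{0}}=4\|DQ(\theta^{*})^{-1}\|_{\Theta}$.

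The main obstacle is not a single hard inequality but the bookkeeping of constants in the core step: the ``budget'' $\tfrac{1}{2M}$ below which the Neumann series for $DQ_{N}(\theta)$ converges must be split between the in--$\theta$ variation $K_{L}\rho_{0}$ (controlled by (A3)) and the sampling error $s$ (controlled by (A2)), which is precisely what forces $\rho_{0}\le\tfrac14(K_{L}M)^{-1}$ and $s_{0}=\tfrac{1}{4M}$; obtaining the conclusion uniformly over $S_{\rho_{0}}(\theta^{*})$ rather than only at $\theta^{*}$ is exactly where the uniform Lipschitz hypothesis (A3) is indispensable, since the probability bound in (A2) can only be invoked at finitely many points. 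A last, more structural point is that reducing the particle system \eqref{5.15} to this abstract statement needs the concrete maps $Q_{N}(\theta)_{t}=\frac{1}{N}\sum_{i=1}^{N}\nabla_{\theta}H(x^{\theta,i}_{t},p^{\theta,i}_{t},\theta_{t})$ and $Q(\theta)_{t}=\mathbb{E}_{\mu_{0}}\nabla_{\theta}H(x^{\theta}_{t},p^{\theta}_{t},\theta_{t})$, built from the Hamiltonian of Definition \ref{Definition 5.9}, to actually verify (A1)--(A3) --- a propagation--of--chaos / law--of--large--numbers input for (A2) and a regularity input for (A1) and (A3) --- which is taken here as the standing assumption \ref{Hypothesis 5.3}.
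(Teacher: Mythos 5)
Your proposal is correct and rests on the same analytic core as the paper's proof (Banach lemma/Neumann series for invertibility with the bound $2\|DQ(\theta^*)^{-1}\|_\Theta$, and the stability constant $K_{\rho_0}=4\|DQ(\theta^*)^{-1}\|_\Theta$ on the ball of radius $\rho_0\leq\min(\rho,\tfrac14(K_L\|DQ(\theta^*)^{-1}\|_\Theta)^{-1})$), but it is organized differently in two respects. First, the paper defines $\theta_N(s)$ directly as the set of controls $\theta$ for which $\|Q(\theta^*)-Q_N(\theta)\|<s$ and $\|DQ(\theta^*)-DQ_N(\theta)\|<s$, so the error bound in the statement is definitional, the Banach lemma is applied pointwise on that set with only the $s$-perturbation (allowing the larger threshold $s_0=\tfrac12\|DQ(\theta^*)^{-1}\|_\Theta^{-1}$), and the probability bound is asserted from the assumptions in \ref{Hypothesis 5.3}; you instead invoke (A2) only at the single point $\theta^*$ to build the high-probability event honestly, and then pay for uniformity over $S_{\rho_0}(\theta^*)$ with the Lipschitz budget $K_L\rho_0$, which is exactly what forces your smaller $s_0=\tfrac14\|DQ(\theta^*)^{-1}\|_\Theta^{-1}$ but yields invertibility of $DQ_N(\theta)$ on the whole ball rather than only on the good set — arguably a cleaner reading, since the paper's step deducing $\mathbb{P}[\theta_N(s)]\geq 1-r_1-r_2$ from the pointwise deviation bounds is the least justified part of its argument (and it cites (A3) where (A2) is meant), while your construction verifies the bound $\|Q_N(\theta)-Q(\theta^*)\|<s$ only at the base point $\theta^*$. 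Second, the paper outsources the final stability claim to Proposition 5 of \cite{han2019mean}, whereas you inline it via the averaged-derivative (mean value) representation of $Q_N(y)-Q_N(z)$ plus the same Neumann estimate; this makes your write-up self-contained at the cost of needing $Q_N$ differentiable along segments (only a.e. existence of $DQ_N$ is assumed in (A2), a minor wrinkle shared with the cited proposition). Your closing remark that (A1)--(A3) must still be verified for the concrete maps $Q_N$ and $Q$ built from the Hamiltonian matches the paper's standing-assumption treatment.
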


\begin{remark}
Although the result of the theorem does not directly achieve the approximation of the maximum possible migration orbit of the particle system, it shows that the solutions of the core equations in the Pontryagin maximum principle have correspondence. Therefore, it is reasonable to expect that the correspondence between the most possible transfer orbits can be solved when the solution equations corresponding to the optimal control $\theta^*$ and $\theta^{N}$are approximated. This will be the focus of future research.
\end{remark}
\begin{proof} For $s>0$, define
$$
\begin{aligned}
\theta_N(s):= & \left\{\theta \in \Theta: \left\|Q\left(\theta^*\right)-Q_N(\theta)\right\|_\mathbb{R}<s\right. \\
& \left.\text {and } \left\|D Q\left(\theta^*\right)-D Q_N(\theta)\right\|_\Theta<s\right\} .
\end{aligned}
$$
It is observed that $D Q_N(\theta)$ is measurable, and according to the assumption \ref{Hypothesis 5.3}(A3), we have $\mathbb{P}\left[\theta_N(s)\right] \geq 1-r_1(N, s)-r_2(N, s)$. Now, choose $s$ that is small enough to make $s \leq$ $s_0=\frac{1}{2}\left\|D Q\left(\theta^*\right)^{-1}\right\|_\Theta^{-1}$. For each $\theta \in \theta_N(s)$, it is known from the Banach lemma \cite{tromba1972morse} that $D Q_N(\theta)$ is non-singular, and 
$$
\left\|D Q_N(\theta)^{-1}\right\|_\Theta \leq \frac{\left\|D Q\left(\theta^*\right)^{-1}\right\|_\Theta}{1-\frac{1}{2}}=2\left\|D Q\left(\theta^*\right)^{-1}\right\|_\Theta .
$$
Finally, from the literature \cite{han2019mean}[Proposition 5], we can derive the stability of $D Q_N(\theta)$ on $S_{\rho_0}\left(\theta^*\right)$, where $\rho_0 \leq \min \left(\rho, \frac{1}{4}\left(K_L\left\|D Q\left(\theta^*\right)^{-1}\right\|_\Theta\right)^{-1}\right)$, stability constant is $K_{\rho_0}=4\left\|D Q\left(\theta^*\right)^{-1}\right\|_\Theta$. 
\end{proof}

The above theorem tells us that when the solution $\boldsymbol{\theta}^*$ to the optimal control problem of the mean field limit McKean-Vlasov stochastic differential equation is stable, for a sufficiently large number of particles $\rm N$, We are very likely to find a random variable $\boldsymbol{\theta}^N$ in its neighborhood, which is a stationary solution to the optimal control problem of the mean field limit equation. With the solution of the mean field optimal control problem, we can use the Pontryagin maximum principle to solve the Forward-Backward Stochastic Differential Equation (FBSDE), so as to find the most probable transition pathways of the system. The above theorem establishes the estimation of the control variable of the most probable transition pathways of the particle system. Although the approximation of the orbit is not realized directly, the result of the theorem illustrates the correspondence between the stochastic interacting particle system and the most probable transition pathways  of the mean field limit McKean-Vlasov stochastic differential equation.

\section{A Machine Learning Framework for Solving High-dim Interacting Particle System}

\subsection{Problem Setting: Interacting Particle Systems and the Mean Field Limit}

We consider the problem of approximating a stochastic interacting particle system and its mean field limit described by a McKean–Vlasov stochastic differential equation (SDE), under Brownian noise. Let $B = \left(B_t\right)_{0 \leq t \leq T}$ be a standard $k$-dimensional Brownian motion defined on a filtered probability space $(\Omega, \mathcal{F}, \mathbb{P})$ with the natural filtration $\mathbb{F} = \left(\mathcal{F}_t\right)_{0 \leq t \leq T}$. For any random variable/vector or stochastic process $X$, we denote its distribution by $\mu_X$.

We use $\mathscr{P}_2\left(\mathbb{R}^{d}\right)$ to denote the $2$-Wasserstein space of Borel probability measures on $\mathbb{R}^{d}$ with finite second moments. Equipped with the Wasserstein-2 metric $W_2$, this space forms a complete and separable metric space. Throughout this work, we assume the marginal distribution $\mu_t \in \mathscr{P}_2(\mathbb{R}^d)$ for all $t$.

Following the formulation by Sznitman \cite{burkholder1991topics}, we focus on the McKean–Vlasov SDE of the form:
\begin{equation} \label{eq:mkv-sde}
\begin{aligned}
d X_t &= \left[ \int_{\mathbb{R}^d} b(X_t, y) \, \mu_t(dy) \right] dt + \sigma \, dB_t, \quad 0 \leq t \leq T, \\
X_0 &\sim \mu_0,
\end{aligned}
\end{equation}
where
\begin{description}[leftmargin=2em,labelindent=0em]
    \item[$X_t \in \mathbb{R}^d$] the state of an individual agent at time \(t\);
    \item[$\mu_t := \mu_{X_t} \in \mathcal{P}_2(\mathbb{R}^d)$] the distribution of the agents at time \(t\);
    \item[$b : \mathbb{R}^d \times \mathbb{R}^d \to \mathbb{R}^d$] a Lipschitz continuous drift function modeling pairwise interactions;
    \item[$\sigma > 0$] the diffusion intensity, modeling the level of stochasticity.
\end{description}

This formulation captures the evolution of a large population of interacting agents under both mutual influence and stochastic perturbations. The system evolves under the averaged interaction field generated by the current population distribution, and thus exhibits mean field behavior. In the limit of an infinite number of agents, the empirical distribution converges (under appropriate assumptions) to the solution $\mu_t$ of the above nonlinear SDE. Our objective is to learn the associated control and density functions of this stochastic mean field system using neural approximation techniques in high dimensions.

\subsection{Mean-Field Optimal Control Formulation via Onsager--Machlup Functional}

Next, we consider the problem of critical transition governed by the stochastic McKean--Vlasov dynamics. Following the Onsager--Machlup action functional and incorporating the corresponding Lagrangian $L(t, \phi, \dot{\phi})$, we pose the following mean-field optimal control problem:
\begin{equation}\label{eq:mf-control}
\underset{\theta \in \Theta }{\operatorname{min}} \quad J(\theta)=\mathbb{E}_{\mu_0}\left\{\int_0^T L\left(t, \phi_t, \dot{\phi}_t; \theta \right) dt + g\left(\phi_T, \mu_{\phi_T}\right)\right\}.
\end{equation}
Here
\begin{description}[leftmargin=2em,labelindent=0em]
    \item $\theta \in \Theta$ is a set of admissible control parameters, e.g., parameterizing the policy or trajectory $\phi$;
    \item $L: [0, T] \times \mathbb{R}^d \times \mathscr{P}_2(\mathbb{R}^d) \to \mathbb{R}$ is the running cost functional derived from the Onsager–Machlup framework;
    \item $g: \mathbb{R}^d \times \mathscr{P}_2(\mathbb{R}^d) \to \mathbb{R}$ is the terminal cost function that enforces desired distributional behavior at the final time;
    \item $\mu_0$ denotes the initial distribution of the agent state.
\end{description}

This formulation aims to find a control strategy, encoded by $\theta$, that minimizes the expected total cost, which consists of the trajectory-wise Onsager–Machlup action and a terminal regularization cost. The optimization is performed over a function space of admissible trajectories or control policies that influence the distributional evolution of the system.

Importantly, this stochastic optimal control problem over probability measures naturally extends the classical deterministic mean field game setting to a fully stochastic, measure-dependent control framework.

\subsection{GAN-inspired Optimization for Mean Field Control Problem}

While generative adversarial networks (GANs) are originally designed for learning data distributions, recent advances have shown a deep connection between generative modeling and mean field control. In particular, the generator in a GAN can be viewed as a parameterized stochastic simulator—analogous to a policy in MFC—while the discriminator plays the role of a value function that guides the optimization.


To this end, we propose to adopt a GAN-style saddle-point training procedure to solve the MFC problem. Specifically, we parameterize the control strategy using a neural generator $\phi^\theta$, and optimize it against a value-function-based critic $\varphi^\psi$, leading to the following \textbf{min-max formulation}:
\begin{equation}
\min_\theta \max_\psi \quad \mathbb{E}\left[ \int_0^T \left( \nabla_x \varphi^\psi(t, X_t^\theta) \cdot \dot{X}_t^\theta - L(t, X_t^\theta, \dot{X}_t^\theta, \mu_t^\theta) \right) dt - g(X_T^\theta, \mu_T^\theta) \right].
\end{equation}
This formulation draws direct inspiration from Wasserstein GANs and provides a scalable approach for solving high-dimensional mean field control problems without discretizing the underlying state space. The following table summarizes the conceptual and mathematical correspondence between these two frameworks:
\begin{table}[htbp]
\centering
\small
\caption{Components of Mean Field Control (MFC) and its conceptual analogy to Generative Adversarial Networks (GANs).}
\renewcommand{\arraystretch}{1.1}
\begin{adjustbox}{max width=\linewidth}
\renewcommand{\arraystretch}{1.25}
\begin{tabular}{lll}
\toprule
\textbf{Component} & \textbf{\textit{Mean Field Control (MFC)}} & \textbf{Generative Adversarial Networks (GANs)} \\
\midrule
Generator & \textbf{Control policy}  & Neural generator \( G(z) \) \\
Critic & \textbf{Value or cost functional}  & Discriminator \( D(x) \) \\
Objective & \textbf{Expected cost minimization}  & Distributional discrepancy minimization \\
Data & \textbf{Trajectory distribution} & Data distribution \( \mathbb{P}_{\text{data}} \) \\
Output & \textbf{Controlled trajectories} & Generated samples \( G(z) \) \\
Training & \textbf{Alternating optimization of policy and value} & Alternating optimization of generator and discriminator \\
Goal & \textbf{Population-level optimal control in stochastic settings} & Learning complex data distributions \\
\bottomrule
\end{tabular}
\label{tab:MFC_GAN_comparison}
\end{adjustbox}
\end{table}

This correspondence reveals that solving a Mean Field Control (MFC) problem is structurally analogous to training a GAN, where the generator plays the role of a parameterized control policy steering the evolution of a population of agents, and the discriminator corresponds to the value function assessing the quality of the induced state distribution. The optimization over control policies mirrors the generator’s learning to match a target behavior, while the value function acts as a critic evaluating the cost-to-go and guiding policy improvement. This analogy enables the application of adversarial learning principles to solve high-dimensional MFC problems and suggests new directions for designing control-aware generative models based on population dynamics.
\subsection{A GAN-style Min-Max Formulation for Mean-Field Optimal Control}

Inspired by the above structural similarities between generative adversarial networks (GANs) and mean-field-type control (MFC) problems, we present a GAN-style saddle-point reformulation of the stochastic optimal control problem over probability measures. Recall the mean-field optimal control problem:
\begin{equation}\label{eq:mf-control}
\underset{\theta \in \Theta }{\operatorname{min}} \quad J(\theta)=\mathbb{E}_{x_0 \sim \mu_0}\left\{\int_0^T L\left(t, \phi_t^\theta(x_0), \dot{\phi}_t^\theta(x_0)\right) dt + g\left(\phi_T^\theta(x_0), \mu_{\phi_T^\theta}\right)\right\},
\end{equation}
where the control parameter $\theta$ defines a family of admissible trajectories $\phi^\theta$ starting from the initial law $\mu_0$. The function $L$ represents a running cost derived from the Onsager–Machlup action functional, and $g$ is a terminal cost that encourages certain distributional behaviors at the final time $T$. To reformulate this as a GAN-style problem, we introduce a discriminator function $\mathcal{D}_\psi: [0, T] \times \mathbb{R}^d \to \mathbb{R}$, parameterized by $\psi \in \Psi$, which estimates the value function or cost density. The resulting saddle-point formulation reads:
\begin{equation}\label{eq:gan-mfc}
\min_{\theta \in \Theta} \max_{\psi \in \Psi} \quad \mathbb{E}_{x_0 \sim \mu_0} \left[ \int_0^T \left( L(t, \phi_t^\theta(x_0), \dot{\phi}_t^\theta(x_0)) + \mathcal{D}_\psi(t, \phi_t^\theta(x_0)) \right) dt + g(\phi_T^\theta(x_0), \mu_{\phi_T^\theta}) \right].
\end{equation}

In this formulation, the \textbf{generator} $\theta$ seeks to minimize the expected control cost by generating optimal trajectories. The \textbf{discriminator} $\psi$ acts as a value function or cost critic, adversarially maximizing the objective to provide meaningful gradients. The interaction between $\theta$ and $\psi$ resembles that of a GAN: the generator adapts its policy to fool the discriminator into assigning low cost to its induced distribution. If the terminal cost $g$ is chosen to be a Wasserstein-type divergence from a desired terminal law $\mu_T$, then the saddle-point problem reduces to a regularized Wasserstein GAN:
\begin{equation}
\min_{\theta \in \Theta} \max_{\psi \in \text{Lip}_1} \quad \mathbb{E}_{x \sim \mu_{\phi_T^\theta}}[\psi(x)] - \mathbb{E}_{y \sim \mu_T}[\psi(y)] + \lambda \, \mathbb{E}_{x_0 \sim \mu_0} \left[\int_0^T L(t, \phi_t^\theta(x_0), \dot{\phi}_t^\theta(x_0)) dt \right].
\end{equation}

This correspondence reveals that solving a mean-field optimal control problem can be cast as a generative adversarial optimization, wherein the generator learns optimal stochastic dynamics and the discriminator guides the learning via an implicit cost structure. This connection offers a powerful approach to solving high-dimensional MFC problems using tools from deep learning and adversarial training. In Figure \ref{fig:om-gan-framework}, we show a schematic diagram of the framework process to solve the MFC problem based on the Wasserstein GAN.

\section{Solving Mean-Field Control via Onsager–Machlup Wasserstein GAN (OM-GAN-Control)}

To address the high-dimensional mean-field optimal control (MFC) problem under stochastic dynamics, we propose a GAN-inspired algorithm that integrates the Onsager–Machlup action functional into the training of a generator–discriminator pair. This approach allows us to leverage adversarial learning to approximate optimal migration trajectories in distribution space.

\subsection{Algorithmic Framework}

In this section, we proposed method, \textbf{OM-GAN-Control}, formulates the MFC problem as a min-max optimization problem inspired by Wasserstein GANs. The generator parameterizes the stochastic control policy, while the discriminator acts as a critic approximating the Wasserstein distance between the evolving state distribution and the desired terminal distribution.

\begin{figure}[H]
    \centering
    \includegraphics[width=0.5\linewidth]{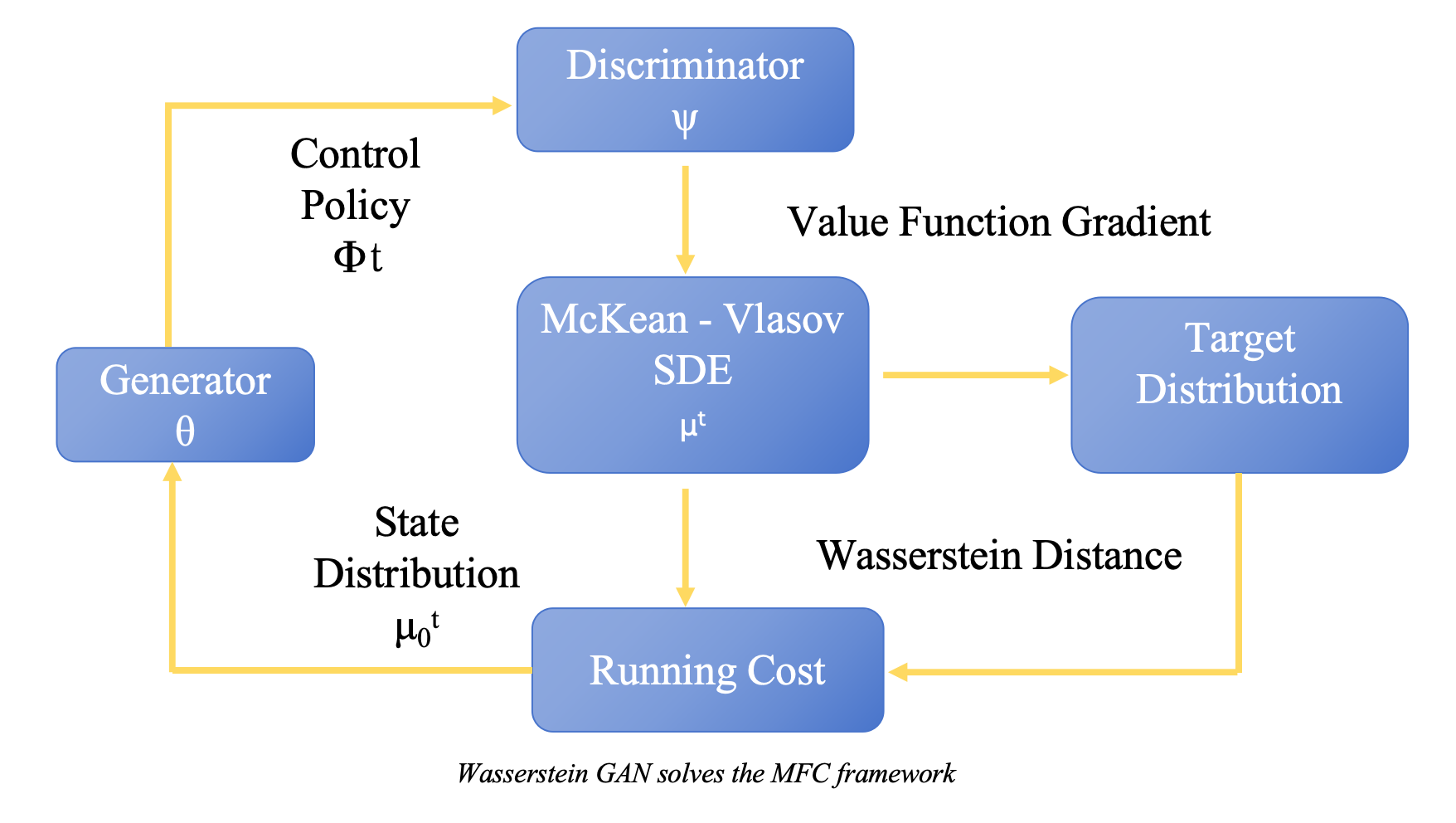}
    \caption{Framework of OM-GAN-Control: Using a Wasserstein GAN formulation to solve mean-field optimal control under Onsager--Machlup theory.}
    \label{fig:om-gan-framework}
\end{figure}

\subsection{Related Works}
\paragraph{Reinforcement Learning under SDE Dynamics and Mean-Field Control.}
Recent advances in reinforcement learning (RL) for systems governed by stochastic differential equations (SDEs) have established a promising connection with mean-field control (MFC) theory. Unlike traditional discrete-time RL, these methods operate in continuous-time environments where the state evolution follows Itô dynamics. In particular, \cite{li2021deep} proposes a deep FBSDE-based policy gradient method for solving continuous-time stochastic control problems, which can be extended to mean-field settings. \cite{jia2022policy} develops a model-free policy optimization method under unknown SDE dynamics, leveraging stochastic approximation and Girsanov transformations. For MFC problems, these approaches are particularly relevant when the drift depends on the law of the state, and the objective involves population-level regularization or constraints. \cite{wang2022deep} incorporates mean-field interactions into the RL framework, using neural SDE solvers and empirical distribution approximations. These methods do not require knowledge of the explicit form of the dynamics or reward functions, making them suitable for high-dimensional systems. Our proposed approach builds on this direction by introducing a generative adversarial mechanism to implicitly learn both the control policy and the associated path distribution, while minimizing the Onsager--Machlup functional that characterizes the most likely paths under SDE noise.

\paragraph{GAN-based Approach for MFC Problems.}
Our work builds upon the emerging idea of solving mean-field control (MFC) problems using generative adversarial learning, inspired by recent advances in Wasserstein GANs (WGANs) \cite{arjovsky2017wasserstein, gulrajani2017improved}. Unlike prior works on GAN-based MFG solvers that treat the value function or density as the generator \cite{lin2021apac}, we directly model the control policy through a neural generator, aligning with the control-theoretic objective of optimizing a path-dependent functional. The discriminator, on the other hand, approximates the value gradient or dual potential that guides optimal trajectories, thereby serving as a feedback signal for adversarial training.

A key distinction in our framework lies in the objective: instead of minimizing a residual or fitting a PDE, we minimize the Onsager--Machlup action functional \cite{onsager1953fluctuations, chen2021likelihood} derived from the stochastic path space, which characterizes the most probable evolution under the McKean--Vlasov dynamics. This variational perspective introduces a principled cost structure in the generator's optimization, bridging the gap between optimal control theory and adversarial generative modeling \cite{fleming2006controlled}. Moreover, by using empirical trajectory distributions, we avoid explicit discretization of the state space and can scale to high-dimensional control problems, thus complementing the limitations of traditional shooting methods and FBSDE-based solvers \cite{han2018solving, germain2022neural}.

\section{Numerical Experiments}
We evaluate the performance of \textbf{OM-GAN-Control} on a class of mean-field optimal control problems governed by nonlocal dynamics and two-point boundary conditions. These problems arise in the context of rare event modeling and metastable transition pathways, where the Onsager--Machlup-type action functional is used to characterize the most probable path between given initial and terminal states. 
\subsection{Experimental Setup}

\paragraph{Problem Formulation.}
The MFC problem we consider is given by
\begin{equation}\label{eq:om_control}
    \begin{cases}
    \displaystyle \underset{\theta \in \Theta }{\operatorname{min}} \quad \mathbb{E}_{\mu_0} \left[\frac{1}{2}\int_{0}^{T} \left(\theta^{2}+\operatorname{div}_{X}f(t, X_t, \mu_{X_t})\right)dt + g(X(T), \mu_{X(T)})\right],\\[1.2ex]
    \text{subject to} \quad \dot{X}_t = h(X_t) \left[ E(X_t) \right] + \sigma \theta, \quad X(0) = x_0, \quad X(T) = x_T,
    \end{cases}
\end{equation}
where \( \mu_{X_t} \in \mathscr{P}_2(\mathbb{R}^d) \) denotes the population density at time \( t \), and the control variable \( \theta \) is applied multiplicatively through the diffusion coefficient. The dynamics are nonlocal, driven by a functional dependence on both the distribution \( \mu_{X_t} \) and the expectation \( E(X_t) \), representing mean-field interactions. The inclusion of the divergence term \( \operatorname{div}_X f \) accounts for non-equilibrium transport effects.

\paragraph{Path-space Control Problem (Onsager--Machlup Formulation).}
The optimal control problem can be equivalently formulated as minimizing the Onsager--Machlup action functional:
\begin{equation} \label{eq:om_control_final}
\begin{cases}
\displaystyle \underset{\theta \in \Theta}{\operatorname{min}} \quad \mathbb{E}_{\mu_0} \left[ \int_0^T L\left(t, X_t, \dot{X}_t \right) \, dt + g(X_T, \mu_{X_T}) \right], \\[1.2ex]
\text{subject to} \quad \dot{X}_t = f(t, X_t, \mu_{X_t}) + \sigma \theta_t, \quad X(0) = x_0, \quad X(T) = x_T,
\end{cases}
\end{equation}
where the Lagrangian is defined as
\begin{equation}
L\left(t, \phi, \dot{\phi}\right) = \frac{1}{2} \left[ \mathrm{B}^{-1} 
\left| \dot{\phi}_t - f\left(t, \phi_t, \mu_{\phi_t} \right) \right|^2 \right] + \frac{1}{2} \operatorname{div}_x f\left(t, \phi_t, \mu_{\phi_t} \right),
\end{equation}
and the divergence term is given by
\begin{equation}
\operatorname{div}_x f = \sum_{i=1}^{d} \partial_{x_i} f_i \left(t, \phi_t, \mu_{\phi_t} \right),
\end{equation}
with \( \mathrm{B} = \sigma \sigma^{*} \) representing the diffusion tensor. The functional in \eqref{eq:om_control_final} corresponds to the Onsager--Machlup action over controlled stochastic trajectories and penalizes both deviation from the drift \( f \) and density-induced compressibility effects. 

\paragraph{Explicit Form of OM-GAN-Control Problem.}
We now substitute the Lagrangian and dynamics into the control problem to obtain the following explicit form:
\begin{equation} \label{eq:om_control_explicit}
\begin{cases}
\displaystyle \underset{\theta \in \Theta}{\operatorname{min}} \quad \mathbb{E}_{\mu_0} \left[ \int_0^T \left( 
\frac{1}{2} \theta_t^{\top} \mathrm{B}^{-1} \theta_t + \frac{1}{2} \sum_{i=1}^{d} \partial_{x_i} f_i(t, X_t, \mu_{X_t})
\right) dt + g(X_T, \mu_{X_T}) \right], \\[1.5ex]
\text{subject to} \quad \dot{X}_t = f(t, X_t, \mu_{X_t}) + \sigma \theta_t, \quad X(0) = x_0, \quad X(T) = x_T,
\end{cases}
\end{equation}
where the control variable \( \theta_t \in \mathbb{R}^{d} \) is regularized by the inverse diffusion-weighted quadratic cost 
\( \theta_t^{\top} \mathrm{B}^{-1} \theta_t \), and the divergence term penalizes the local compressibility of the drift field \( f \). The terminal cost \( g(X_T, \mu_{X_T}) \) measures the deviation of the population from the target distribution.

\begin{remark}
\textbf{Isotropic Diffusion:} If \( \mathrm{B} = \sigma^2 I \) represents isotropic diffusion (where \( I \) is the identity matrix), the term \( \theta_t^{\top} \mathrm{B}^{-1} \theta_t \) simplifies to \( \frac{1}{\sigma^2} \|\theta_t\|^2 \). This corresponds to a standard quadratic regularization term often used in optimal control and reinforcement learning, where \( \theta_t \) is the control input and \( \|\theta_t\|^2 \) penalizes large control actions to avoid over-exploration.
\end{remark}
\begin{remark}
 \textbf{Particle Interaction Systems:} Because the drift term \( f(t,x,\mu) \) represents a particle interaction system, we can model it as:
    \[
    f(t,x,\mu) = -\nabla_x U(x) + \int K(x,y) \mu(dy),
    \]
    where \( U(x) \) is a potential energy function governing the interaction between particles, and \( K(x,y) \) is a kernel function that represents pairwise interactions between particles. In this case, the control problem accounts for both the interaction of particles and the mean-field influence of the population density \( \mu \), leading to a more complex drift field that can be used to model, for example, crowd dynamics or collective behavior in a system of agents.  
\end{remark}

   \textbf{Optimization via GAN-based Methods:} This form is particularly convenient for optimization with GAN-based methods. The expected value in the objective function can be approximated by sampling paths \( \{X_t\}_{t \in [0,T]} \) from the controlled dynamics. The Wasserstein GAN framework allows us to use the discriminator to approximate the Wasserstein distance between the distribution of trajectories generated by the model and the target distribution. The generator network then seeks to minimize this distance, learning an optimal control strategy \( \theta_t \) through backpropagation.
    
    The structure of the functional is naturally suited for the Wasserstein GAN approach because it involves both local smoothness (via the divergence term) and the control regularization, which aligns with the goals of a GAN — to learn a distribution that closely matches a reference distribution. Furthermore, the clear separation between the control term and the system dynamics makes it easy to integrate the control mechanism into a GAN framework.

\textbf{GAN Loss Function for OM-GAN-Control.}
In the GAN framework, the generator network \( G_\theta \) generates the control trajectory \( \theta_t \), and the discriminator \( D_\omega \) evaluates how well the generated trajectory matches the desired behavior according to the Onsager–Machlup functional. The loss function for the generator and discriminator in the context of the \textbf{OM-GAN-Control} problem can be written as:

\begin{equation}
    \mathcal{L}_{\text{GAN}} = \mathbb{E}_{\mu_0} \left[ \int_0^T L\left(t, X_t, \dot{X}_t \right) \, dt + g(X_T, \mu_{X_T}) \right],
\end{equation}
where \( L(t, X_t, \dot{X}_t) \) is the Lagrangian described earlier, and the discriminator \( D_\omega \) is used to monitor the Wasserstein distance between the distribution of generated trajectories and the target distribution. The generator \( G_\theta \) aims to minimize this loss, which leads to an optimal control trajectory that governs the system's dynamics over time. The discriminator \( D_\omega \) plays the role of ensuring that the generated paths adhere to the desired distribution in terms of both the control regularization and the physical dynamics of the system.

\textbf{Neural Parameterization.} The control function \( \theta_t \) is approximated using a generator network \( G_\theta \), while a discriminator network \( D_\omega \) is used to estimate the Wasserstein distance between the reference path measure (uncontrolled) and the optimally controlled path measure. Both networks are implemented as residual neural networks (ResNets), which consist of three hidden layers, each with 512 units. Skip connections are used with a weight of 0.5 to stabilize the training process. The generator \( G_\theta \) uses the ReLU activation function, while the discriminator \( D_\omega \) uses the Tanh activation function to ensure smooth gradients and improved learning efficiency.



\subsection{OM-GAN-Control Algorithm }
We aim to solve the Mean Field Control (MFC) problem, where the control function $\theta_t$ governs the system dynamics, and the optimal control is obtained by minimizing the Onsager-Machlup (OM) functional.

\begin{algorithm}[htbp]
\caption{OM-GAN-Control: Solving MFC via Wasserstein GAN based on Onsager--Machlup Functional}
\label{alg:OM-GAN-Control}
\begin{algorithmic}[1]
    \Require Initial distribution $\mu_0$, target distribution $\mu_T$, time horizon $T$, step size $\Delta t$, iterations $M=T/\Delta t$
    \Require Initialize generator parameters $\theta$, trajectory parameters $\phi$
    \Require Batch size $N$, learning rate $\eta$
    \Ensure Optimal control parameters $\theta^*$, Most Probable Transition Pathway (MPTP) $x^*$
    \While{not converged}
        \State Sample batch of initial data $\{x_0^{(i)}\}_{i=1}^N \sim \mu_0$
        \State Generate controlled trajectories $\{\phi_k^{(i)}\}$ via Euler scheme:
        \begin{equation*}
            \phi_{k+1}^{(i)} = \phi_k^{(i)} + f(t_k, \phi_k^{(i)}, \mu_k^{(N)}) \Delta t 
            + \sigma^{(i)} \sqrt{\Delta t}\,\theta_k^{(i)}.
        \end{equation*}
        \State Compute empirical distribution $\mu_t^{(N)}$ from $\{\phi_t^{(i)}\}$
        \State Evaluate Onsager--Machlup Lagrangian:
        \begin{align*}
            L_t^{OM} &= \tfrac{1}{2}\Big\| B^{-1}\big(\dot{\phi}_t - f(t,\phi_t,\mu_{\phi_t})\big)\Big\|^2 
            + \tfrac{1}{2}\,\mathrm{div}_x f(t,\phi_t,\mu_{\phi _t}).
        \end{align*}
        \State Compute total loss:
        \begin{align*}
            \mathcal{L}_G &= \sum_{k=0}^M \Bigg( 
            \tfrac{1}{2}\sum_{i=1}^N \theta_k^{(i)\top} B^{-1} \theta_k^{(i)}
            + \tfrac{1}{2}\sum_{i=1}^N \partial_{x_i} f_i(t_k,\phi_k,\mu_k^{(N)})
            + g(\mu_T, \mu_{\phi_T}) \Bigg).
        \end{align*}
        \State Update control parameters, where $\theta$ is defined as $\left| \dot{\phi}_t - f\left(t, \phi_t, \mu_{\phi_t} \right) \right|$:
        \[
            \theta \gets \theta - \eta_\theta \nabla_\theta \mathcal{L}_G
        \]
        \State Estimate distribution $\hat{\mu}_t^{(N)}$ with updated $\theta$
        \State Compute Wasserstein-2 loss between distributions $\hat{\mu}_t^{(N)}$ and $\mu_t^{(N)}$  :
        \begin{equation*}
            \mathcal{L}_{W_2} = W_2^2 \!\left( \hat{\mu}_t^{(N)}, \mu_t^{(N)} \right)
            = \inf_{\gamma \in \Pi(\hat{\mu}_t^{(N)}, \mu_t^{(N)})} 
            \int_{\mathcal{X}\times\mathcal{X}} \|x-y\|^2 \, d\gamma(x,y).
        \end{equation*}
        \State Update trajectory parameters:
        \[
            \phi \gets \phi - \eta_\phi \nabla_\phi \mathcal{L}_{W_2}
        \]
    \EndWhile
    \State \textbf{Output:} Optimal control parameters $\theta^*$ , the Most Probable Transition Pathway(MPTP) $x^{*}$.
\end{algorithmic}
\end{algorithm}

\subsection{Application to FitzHugh-Nagumo Neuronal Dynamics Model}

\subsubsection{Model Formulation and Stochastic Perturbation}

In order to investigate the noise-induced transitions between metastable states in a Neuronal Dynamical system, we consider a coupled slow-fast system with $N$ nodes. Each node consists of a fast variable $x_i$ and a slow variable $y_i$, governed by the following deterministic system:
\begin{equation}
\begin{aligned}
\dot{x}_i &= F(x_i, y_i, \mu) + \frac{K}{N} \sum_{j=1}^{N} w_{ij} H(x_i, y_i, x_j, y_j), \\
\dot{y}_i &= G(x_i, y_i, \mu),
\end{aligned}
\quad i=1, \ldots, N,
\label{eq:deterministic}
\end{equation}
where $F$ and $G$ describe the intrinsic node dynamics parameterized by a bifurcation parameter $\mu$. The coupling matrix $W = \{ w_{ij} \}$ encodes the network topology, $K$ controls the coupling strength, and $H$ is the coupling function. This general form can describe various systems, including neuronal dynamics, gene regulatory networks, and ecological models.

To capture stochastic transitions between metastable states, we incorporate additive Brownian noise into the model. The addition of noise transforms the system into a set of stochastic differential equations (SDEs):
\begin{align}
\begin{aligned}
dx_i &= \Bigg[ F(x_i, y_i, \mu)
+ \frac{K}{N} \sum_{j=1}^{N} w_{ij} H(x_i, y_i, x_j, y_j) \Bigg] dt
+ \sigma_x\, dW_i^{x}(t), \\
dy_i &= G(x_i, y_i, \mu)\, dt
+ \sigma_y\, dW_i^{y}(t), 
\qquad i = 1, \ldots, N.
\end{aligned}
\label{eq:coupled_sde_xy}
\end{align}

\noindent
Where $\sigma_x, \sigma_y > 0$ denote the noise intensities acting on the 
$x$- and $y$-components, respectively. The inequality $\sigma_y \ll \sigma_x$ characterizes a slow--fast separation, where the $x$-component evolves under stronger stochastic forcing. 
The stochastic processes $W_i^{x}(t)$ and $W_i^{y}(t)$ are 
independent standard Wiener processes satisfying:
\begin{align*}
\mathbb{E}\!\left[ dW_i^{x}(t)\, dW_j^{x}(t) \right] &= \delta_{ij}\, dt, 
&\mathbb{E}\!\left[ dW_i^{y}(t)\, dW_j^{y}(t) \right] &= \delta_{ij}\, dt, 
&\mathbb{E}\!\left[ dW_i^{x}(t)\, dW_j^{y}(t) \right] &= 0,
\end{align*}
indicating that each component and each agent experiences 
\textit{independent and identically distributed} Gaussian perturbations. Under the additive noise setting, we will take $\sigma_x$ and $\sigma_y$  as constants.

\paragraph{Euler--Maruyama discretization.}
For a time step $\Delta t>0$ and $t_k = k\,\Delta t$, the scheme for \eqref{eq:coupled_sde_xy} reads
\begin{equation}
\begin{aligned}
x_{i,k+1} &= x_{i,k} 
+ \Bigg[ F(x_{i,k},y_{i,k},\mu_k)
+ \frac{K}{N}\sum_{j=1}^{N} w_{ij}\, H(x_{i,k},y_{i,k},x_{j,k},y_{j,k}) \Bigg]\Delta t 
+ \sigma_x \sqrt{\Delta t}\,\xi^{x}_{i,k}, \\
y_{i,k+1} &= y_{i,k} 
+ G(x_{i,k},y_{i,k},\mu_k)\,\Delta t 
+ \sigma_y \sqrt{\Delta t}\,\xi^{y}_{i,k},
\qquad i=1,\ldots,N,\; k=0,1,\ldots .
\end{aligned}
\label{eq:euler_xy}
\end{equation}

\noindent
\textit{Independence and identical distribution.}
The innovations $\xi^{x}_{i,k},\xi^{y}_{i,k}\sim\mathcal N(0,1)$ are i.i.d. Gaussian random variables, mutually independent across indices and components, i.e.
\[
\mathbb E[\xi^{x}_{i,k}\xi^{x}_{j,\ell}]=\delta_{ij}\delta_{k\ell},\quad
\mathbb E[\xi^{y}_{i,k}\xi^{y}_{j,\ell}]=\delta_{ij}\delta_{k\ell},\quad
\mathbb E[\xi^{x}_{i,k}\xi^{y}_{j,\ell}]=0 .
\]
Here $\mu_k$ denotes the empirical law at time $t_k$ obtained from $\{(x_{i,k},y_{i,k})\}_{i=1}^N$.



This modeling choice aligns well with the biophysical interpretation in neuronal systems, where fast variables often represent membrane potentials subject to stochastic synaptic inputs and ion channel fluctuations. Consequently, by focusing on noise in the fast variables, we capture both the essential dynamical features and the computational tractability needed for subsequent numerical investigations.

To ensure that our model is comparable to existing studies and to validate our approach against known benchmarks, we adopt the parameter values from the work of Qin and Lin \cite{QinLin2023}, which investigates tipping points for pulsatile oscillations in dynamical networks. Specifically, we focus on the FitzHugh-Nagumo system, which describes the self-dynamics of each node through the membrane potential $V_i$ (fast variable) and the recovery variable $W_i$ (slow variable). The model equations for node $i$ are given by:
\begin{equation}
\begin{aligned}
\frac{dV_i}{dt} &= V_i (V_i - \theta)(1 - V_i) - W_i + I_{\mathrm{ext}}, \\
\frac{dW_i}{dt} &= \epsilon \left( V_i - \gamma W_i \right),
\end{aligned}
\label{eq:fitzhugh-nagumo}
\end{equation}
where $I_{\mathrm{ext}}$ represents an external current input. For the coupling function, we adopt a sigmoid-type interaction:
\begin{equation}
H(V_i, W_i, V_j, W_j) = \frac{1}{1 + \exp\left[ -\tau \left( V_j - V^* \right) \right]},
\label{eq:sigmoid}
\end{equation}
where $\tau$ controls the steepness of the sigmoid and $V^*$ is a reference potential. We take the same parameter values as in \cite{QinLin2023}, unless otherwise specified, we set:$
\theta = 0.5,\quad \gamma = 1,\quad \epsilon = 0.003,\quad \tau = 1,\quad V^* = 0.$

These parameters ensure that fast-slow dynamics exhibit metastable states and pulsatile oscillations that are characteristic of neuronal networks. The coupling matrix $W = \{ w_{ij} \}$ can be chosen based on network topology; for initial experiments, we use an all-to-all network with uniform weights or alternatively a sparse random network to reflect more realistic neuronal architectures. With this parameterization, the noise-perturbed stochastic model described in Eq.~\eqref{eq:stochastic} is fully specified, providing a solid foundation for numerical simulations of noise-induced metastable transitions and control strategies.

\subsubsection{Numerical Settings}

\begin{table}[htbp]
\centering
\small
\caption{Numerical parameters used in the experiments.}
\renewcommand{\arraystretch}{1.1}
\begin{adjustbox}{max width=\linewidth}
\renewcommand{\arraystretch}{1.25}
\begin{tabular}{ll}
\toprule
\textbf{Parameter} & \textbf{Description / Value} \\
\midrule
$K$ & Coupling strength, chosen from $K \in [0.1,2.0]$; here $K = 1$. \\
$N$ & Number of nodes in the network; experiments start with $N = 10$ and scale up to $N = 100$. \\
$I_{\mathrm{ext}}$ & Constant external input, set to $I_{\mathrm{ext}} = 0.27$. \\
$V_i$ & Initial membrane potential, randomly sampled from $\mathcal{U}[0,1]$. \\
$W_i$ & Initial recovery variable, randomly sampled from $\mathcal{U}[0,0.5]$. \\
\bottomrule
\end{tabular}
\label{tab:numerical_settings}
\end{adjustbox}
\end{table}

According to the above parameter settings, we get the final model as follows:

\begin{equation}
\begin{aligned}
d x_i(t) &= 
\Big[
x_i (x_i - 0.5)(1 - x_i)
- y_i + 0.27
+ \frac{1}{N}\sum_{j=1}^{N} w_{ij}\,\sigma(x_j)
\Big]\,dt
+ \sigma_x\, dW_i^{x}(t),\\[3pt]
d y_i(t) &= 0.003\,(x_i - y_i)\,dt
+ \sigma_y\, dW_i^{y}(t),
\qquad i = 1,\ldots,N,
\end{aligned}
\label{eq:nondegenerate_SDE}
\end{equation}
where $\sigma(x) = \dfrac{1}{1 + \exp(-x)}$ is the sigmoid activation function, 
and $W_i^x(t)$ and $W_i^y(t)$ denote two independent standard Wiener processes 
(i.e., $\mathbb{E}[\,dW_i^x\, dW_i^y\,] = 0$). 
The diffusion matrix $\Sigma$ in Eq.~(7.11) is defined as
\[
\Sigma =
\begin{bmatrix}
\sigma_x & 0 \\[2pt]
0 & \sigma_y
\end{bmatrix},
\qquad
D = \Sigma \Sigma^{\!\top}
= \mathrm{diag}(\sigma_x^2,\, \sigma_y^2),
\]
which is positive definite for $\sigma_x, \sigma_y > 0$. Here, $\Sigma$ represents the diffusion matrix that maps the unit-variance
Brownian increments $dW(t)$ to the stochastic perturbations in the state space,
while D = $\Sigma \Sigma^{\!\top}$
denotes the corresponding diffusion covariance matrix. 
The matrix $D$ characterizes the local variance structure of the stochastic
flow and determines how noise is geometrically distributed in different
directions of the phase space. 
In the context of the Onsager--Machlup functional, 
$D$ plays the role of the metric tensor in the stochastic path space, 
since the instantaneous path deviation 
$\dot{z}_t - f(z_t)$ is weighted by $D^{-1}$:
\begin{equation}
L_{\mathrm{OM}} = 
\int_0^T 
\big\langle 
\dot{z}_t - f(z_t),
D^{-1}(\dot{z}_t - f(z_t))
\big\rangle \, dt.
\end{equation}
Hence, $D$ must be positive definite to ensure that 
the stochastic path probability is well-defined. 
If $D$ is degenerate (rank-deficient), 
the noise does not span the entire tangent space, 
which may lead to numerical instability, 
biased diffusion dynamics, and ill-posedness of the 
Onsager--Machlup action functional.

We simulate and analyze the dynamic behavior of the FitzHugh–Nagumo Neural Network system with a fully connected coupling network. Due to the influence of the network structure on the
group synchronization behavior of the interacting particle system in this model. We selected two
coupling structures: fully connected and small-world network. Due to
The fully connected structure can quickly achieve full
synchronization at a low coupling strength, making it suitable for model simplification mode dynamic analysis. However, the connection structure of real neural systems is often not
completely uniform, but rather closer to the characteristics of a small-world network. That is to say, most connections are local connections that coexist with a small number of long-range connections.

We further utilize the Watts-Strogatz small-world network to generate the coupling matrix and find that the system under this structure exhibits more complex dynamic behaviors, including partial synchronization and group fluctuations. These phenomena are closer to the dynamic patterns observed in the actual neural system. The key factors of synchronization are: network structure, coupling strength, and consistency of local dynamics. From the perspective of physiological rationality, the small-world coupling structure is more suitable as the basis for modeling the dynamics of neural networks. In contrast, the fully connected structure is only used as an idealized reference model in theoretical analysis. And finally we choose coupling intensity $K =14$ as system parameter. Based on the synchronization phenomenon we observed, we will now demonstrate the dynamical properties of a single neuron model and a two-dimensional model by manifold reduction.

\subsubsection{Visulization}

First, we analyze the dynamic properties of the deterministic system in the case of a single node ($N=1$), and the corresponding deterministic system becomes:
\begin{equation}
\begin{aligned}
d x &= \left[  x (x - 0.5)(1 - x) - y + I + a  \frac{1}{1 + \exp \left( - x \right)} \right] dt, \\
d y &= 0.003 \left( x - y \right) dt,
\end{aligned}
\label{eq:stochastic}
\end{equation}
where $a$ represents the bifurcation parameter of this system. We studied the dynamic properties of the system under different parameter a conditions and found that when a=0.25, the system exhibits a bistable structure (a stable point and an oscillation limit cycle) as shown in Figure \ref{fig:phase}.

\begin{figure}[h]
    \centering
    \includegraphics[width=0.6\textwidth]{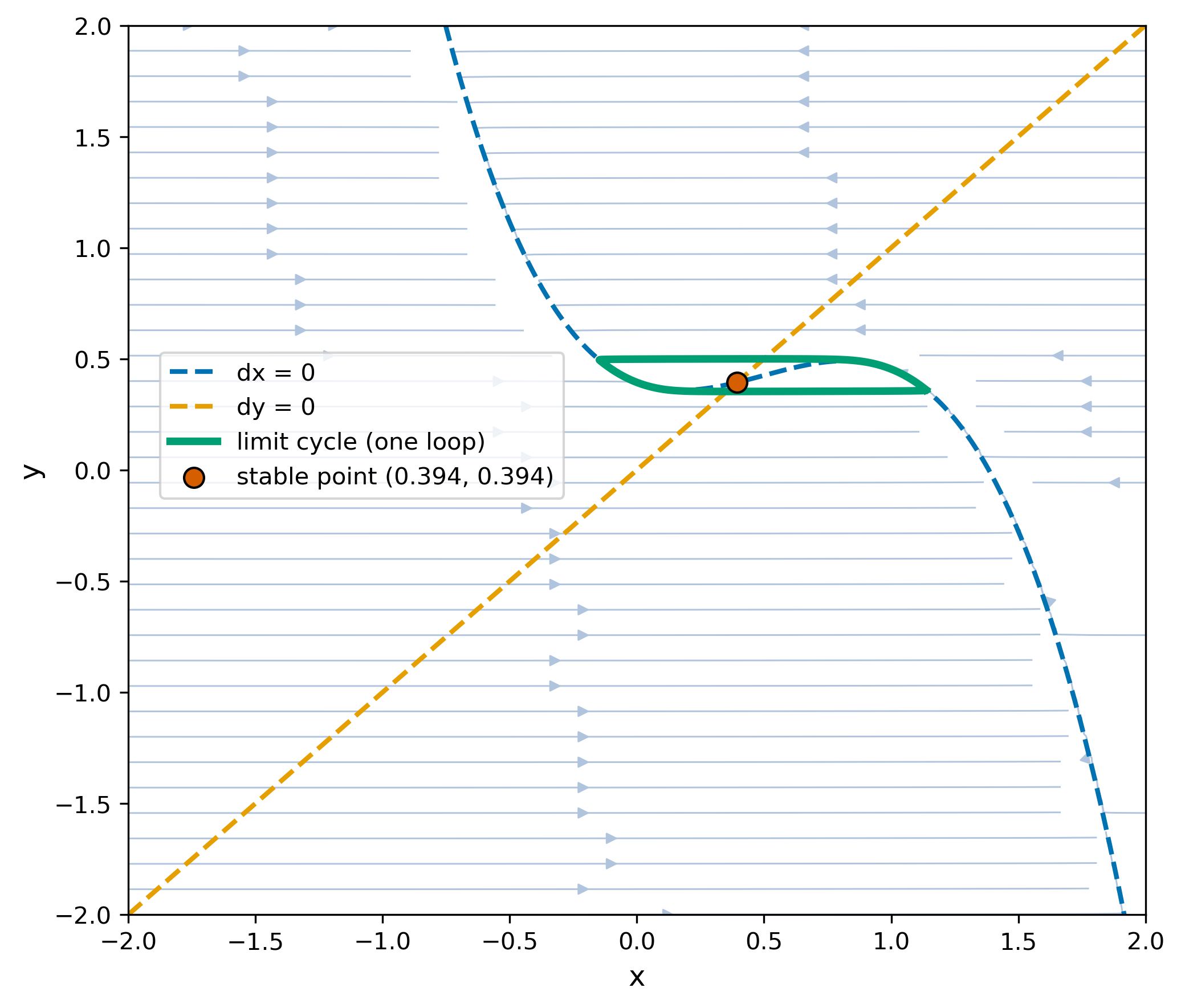}
    \caption{Vector Field with Limit Cycle and Stable Point: the main plot shows the stable point and limit cycle of the system when the bifurcation parameter $a=0.25$. }
    \label{fig:phase}
\end{figure}

\begin{figure}[h]
    \centering
    \includegraphics[width=0.5\textwidth]{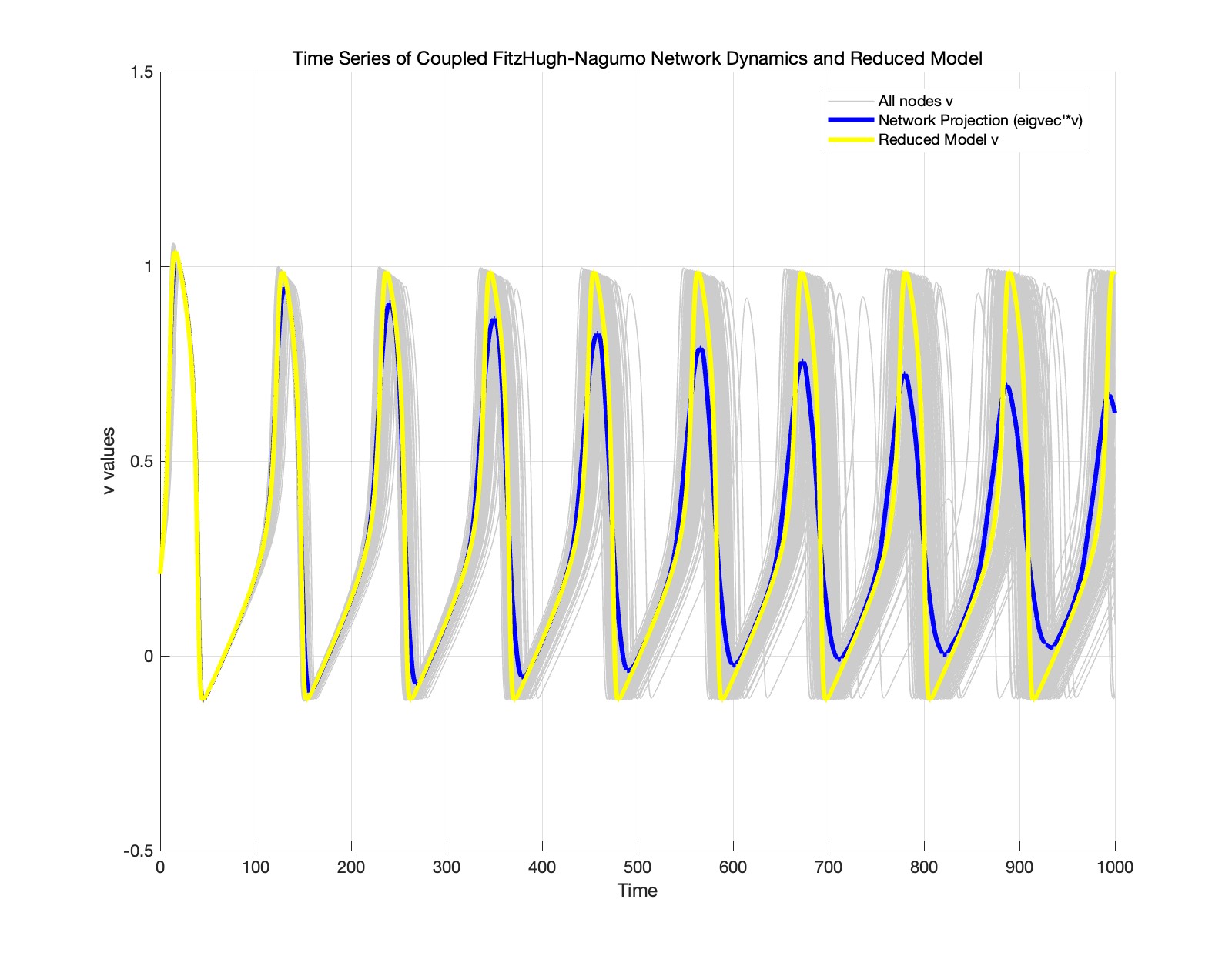}
    \caption{Time series comparison of the FitzHugh-Nagumo neuronal network dynamics and its reduced-order model.}
    \label{fig:timeseries}
\end{figure}


We intend to use manifold reduction to reduce the dimensionality of the original system \cite{QinLin2023}. Figure \ref{fig:timeseries} shows the rationality of this approximation.
The light gray curves represent the membrane potential of all $N=100$ individual neurons in the coupled network with the Watts-Strogatz small world topology. The thick blue line shows the projection of the high-dimensional network state onto the leading eigenvector of the coupling matrix, capturing the dominant collective mode. The yellow thick line corresponds to the solution of the reduced two-dimensional FitzHugh-Nagumo model derived via spectral reduction. The close agreement between the projected network dynamics and the reduced model validates the spectral reduction approach in capturing the macroscopic behavior of the large-scale neuronal network.

In the absence of noise, the coupled neuron system \eqref{eq:stochastic} exhibits rich dynamical behaviors driven by intrinsic nonlinear interactions. Specifically, we consider a node of $N=200$ FitzHugh–Nagumo-like units with excitatory coupling through a sigmoid activation function. The following observations are made based on the simulation results: Figure~\ref{fig:x1_time} shows the temporal evolution of the membrane potential of a single neuron $x_1(t)$, demonstrating clear periodic oscillations. This is a typical manifestation of limit-cycle behavior. Figure~\ref{fig:phase_xy} presents the phase trajectory of the same neuron in the $(x_1, y_1)$ plane, forming a closed, loop-shaped orbit. Relaxation-type oscillations are characterized by slow and fast phases due to the inherent time-scale separation. Figure~\ref{fig:xi_all} displays the time series of all neuron states $\{x_i(t)\}_{i=1}^N$. Initially, the neurons evolve asynchronously. However, as time progresses, the trajectories gradually synchronize onto a common periodic orbit, showing complete phase and frequency synchronization. Figure~\ref{fig:std_x} depicts the standard deviation $\mathrm{std}(x_i(t))$ across the network, which gradually decays to zero. This quantitatively confirms that the system converges to a fully synchronized state along the emergent resonance limit cycle.

These results clearly demonstrate the spontaneous emergence of collective oscillations and synchronization in a population of non-identical neuron units with local coupling, even in the absence of stochastic perturbations.

\begin{figure}[htbp]
    \centering
    \begin{subfigure}[b]{0.48\textwidth}
        \centering
        \includegraphics[width=\linewidth]{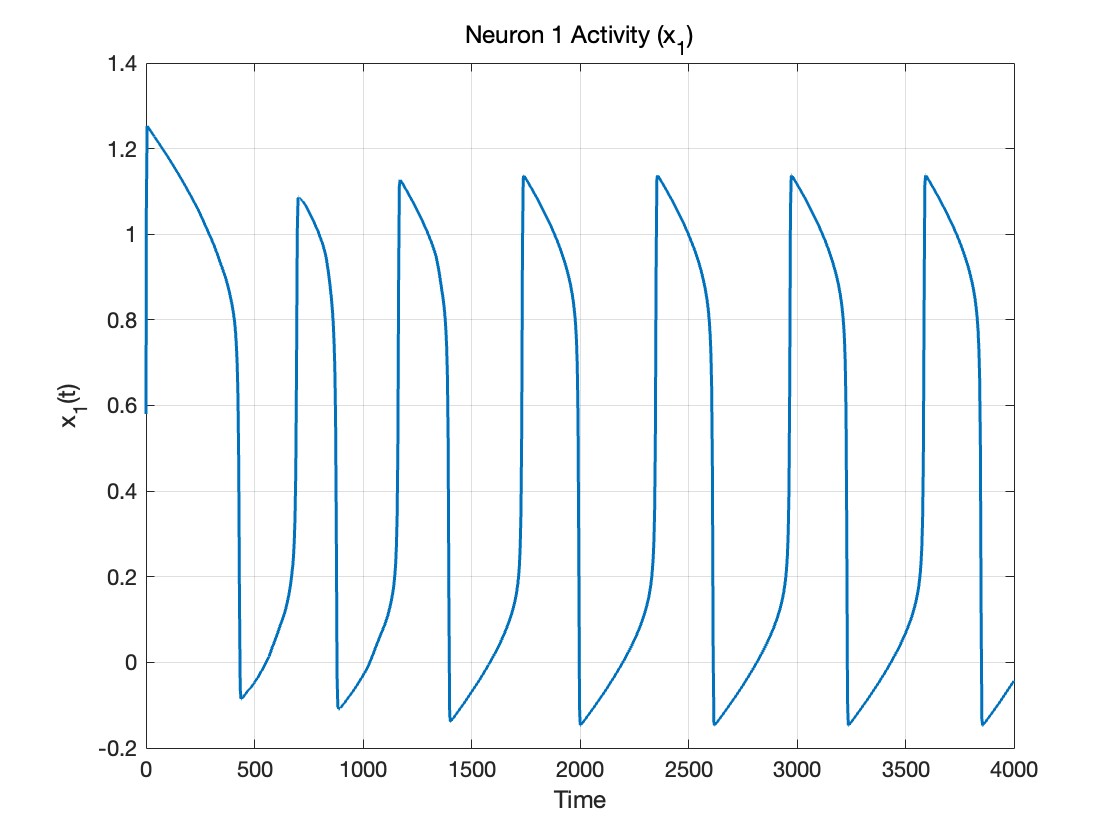}
        \caption{Periodic oscillation orbit of a single neuron. $x_1(t)$}
        \label{fig:x1_time}
    \end{subfigure}
    \hfill
    \begin{subfigure}[b]{0.48\textwidth}
        \centering
        \includegraphics[width=\linewidth]{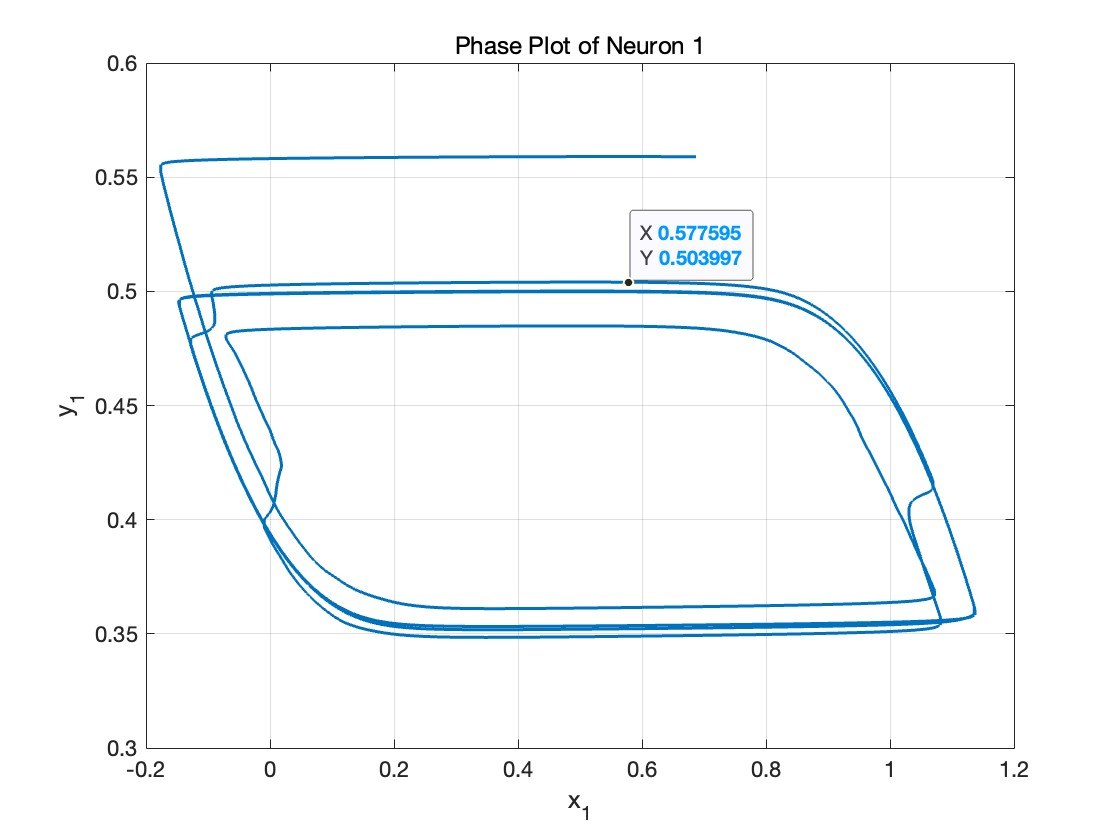}
        \caption{Phase diagram trace showing limit cycles of $x_1$ vs. $y_1$ .}
        \label{fig:phase_xy}
    \end{subfigure}
    
    \vspace{0.3cm}
    
    \begin{subfigure}[b]{0.48\textwidth}
        \centering
        \includegraphics[width=\linewidth]{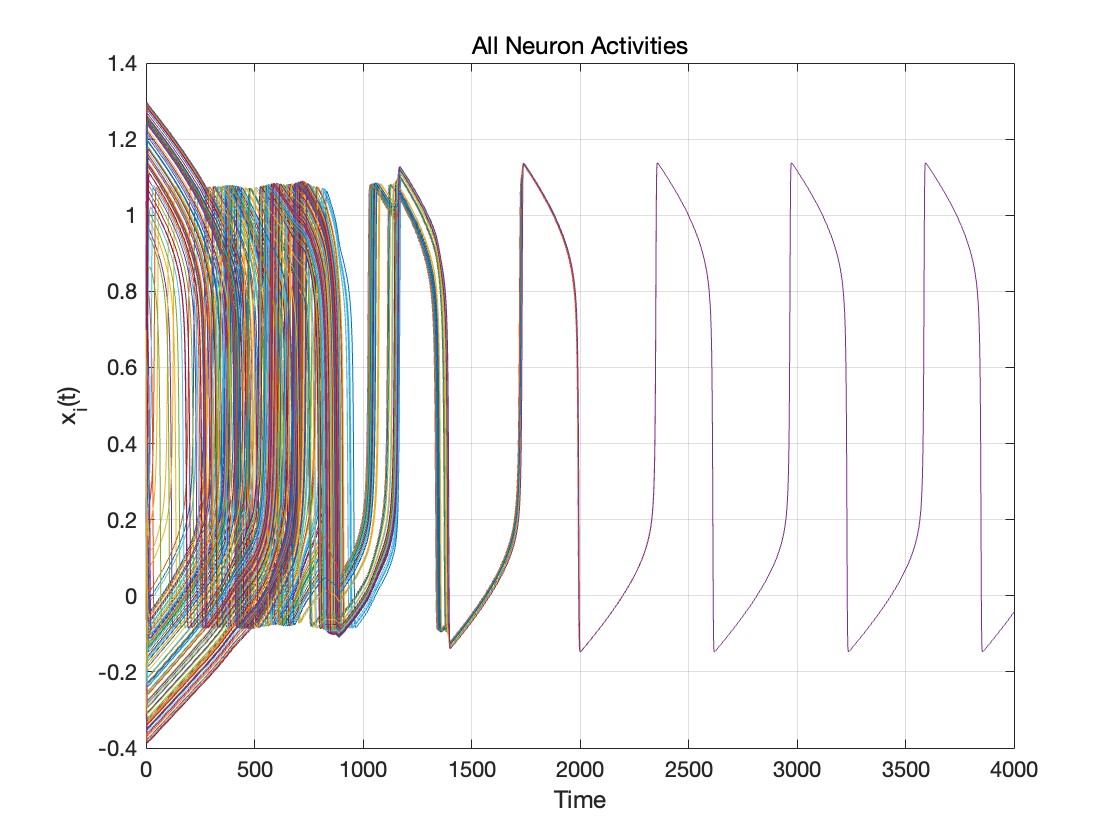}
        \caption{All neuron variables $x_i(t)$ show the final synchronization.}
        \label{fig:xi_all}
    \end{subfigure}
    \hfill
    \begin{subfigure}[b]{0.48\textwidth}
        \centering
        \includegraphics[width=\linewidth]{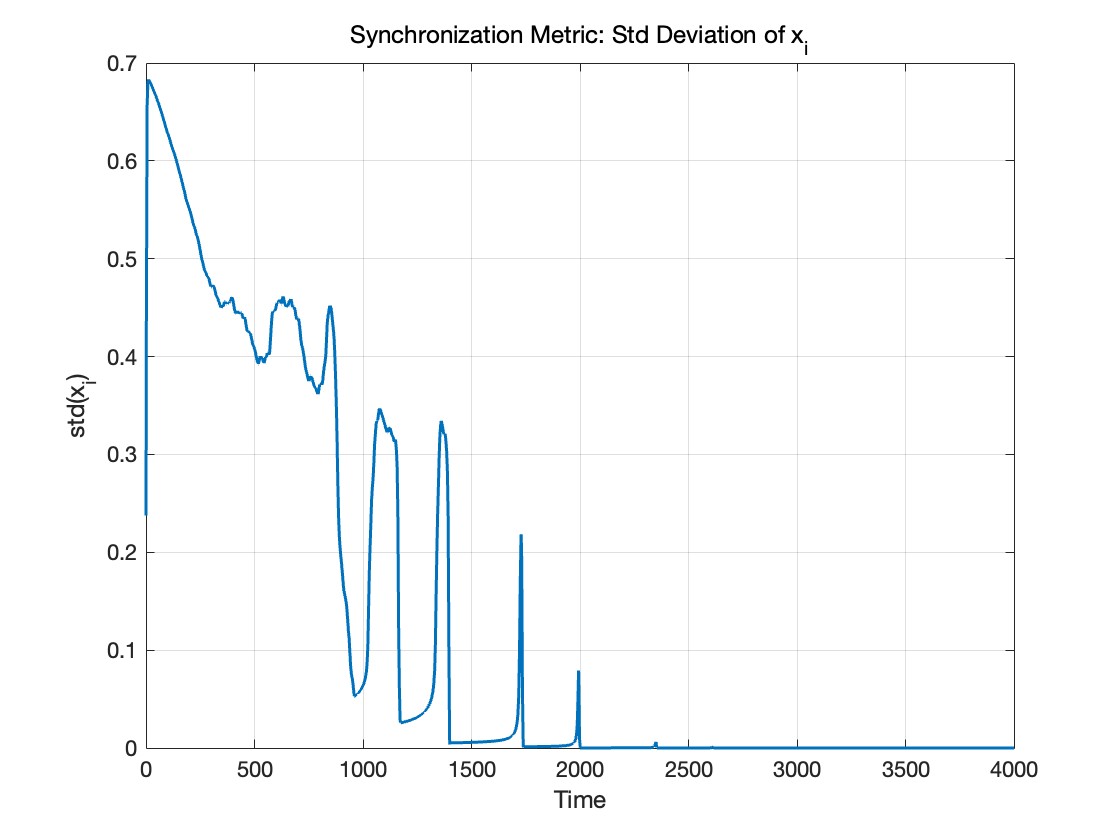}
        \caption{The standard deviation $\mathrm{std}(x_i(t))$ indicates synchronous convergence.}
        \label{fig:std_x}
    \end{subfigure}
    
    \caption{Schematic diagram of the synchronization and limit cycle behavior of a neuron system under noise-free conditions. From the trajectory of a single neuron to the evolution of the entire network, stable synchronous oscillations are exhibited.}
    \label{fig:synchronization}
\end{figure}

To investigate the steady-state behavior of the coupled neuron system, we simulate the deterministic dynamics over a long time horizon ($T = 5000$ with step size $\Delta t = 0.1$) and extract the last 1000 time steps as the empirical steady-state window. The histograms of all $x_i$ and $y_i$ values during this window are shown in Figure~\ref{fig:steady_hist}. As seen in Figure~\ref{fig:steady_hist}, both $x$ and $y$ exhibit sharply peaked unimodal distributions, indicating that each neuron tends to oscillate within a narrow amplitude range in the long-time limit. This behavior aligns with the expected dynamics of relaxation oscillators constrained on a stable limit cycle. Interestingly, the joint distribution plot in Figure~\ref{fig:joint_scatter} reveals that the $(x, y)$ pairs lie approximately along a narrow curve, closely resembling the underlying limit cycle trajectory in phase space. This implies that, despite the dimensionality of the full system, the long-time behavior is effectively constrained to a low-dimensional manifold, further confirming the emergence of synchronized collective oscillations.

Overall, these results illustrate that the entire neural population converges to a synchronized periodic attractor, characterized by a coherent distribution of phase-locked oscillations in the absence of stochastic perturbations.

\begin{figure}[h]
    \centering
    \includegraphics[width=0.8\textwidth]{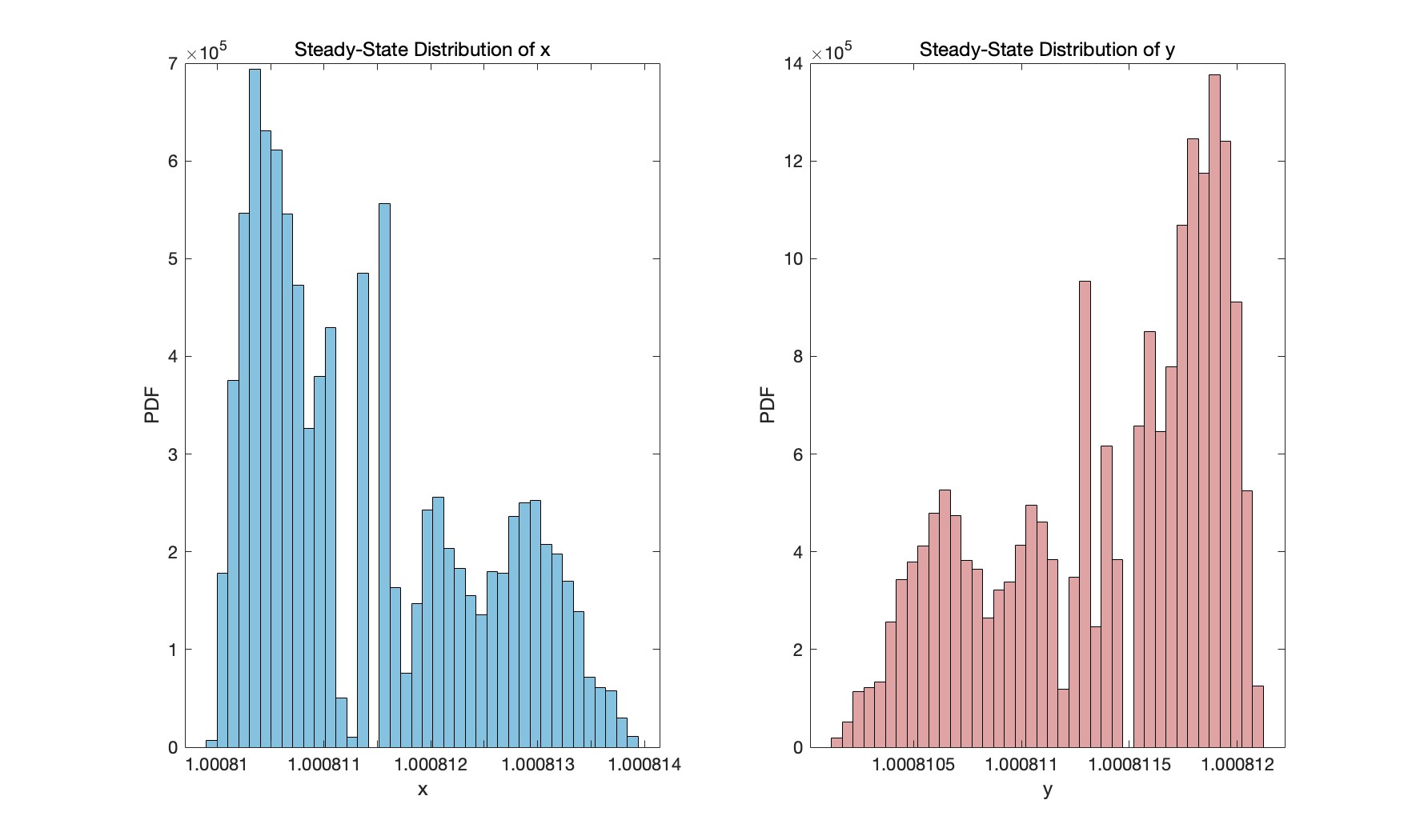}
    \caption{Phase diagram: the main plot on the left shows the stable point and limit cycle of the system when the bifurcation parameter $a=0.25$, and the sub-plot on the right is a local enlarged view of the intersection of the zero slope line, that is the stable point of the system. }
    \label{fig:steady_hist}
\end{figure}

\begin{figure}[h]
    \centering
    \includegraphics[width=0.8\textwidth]{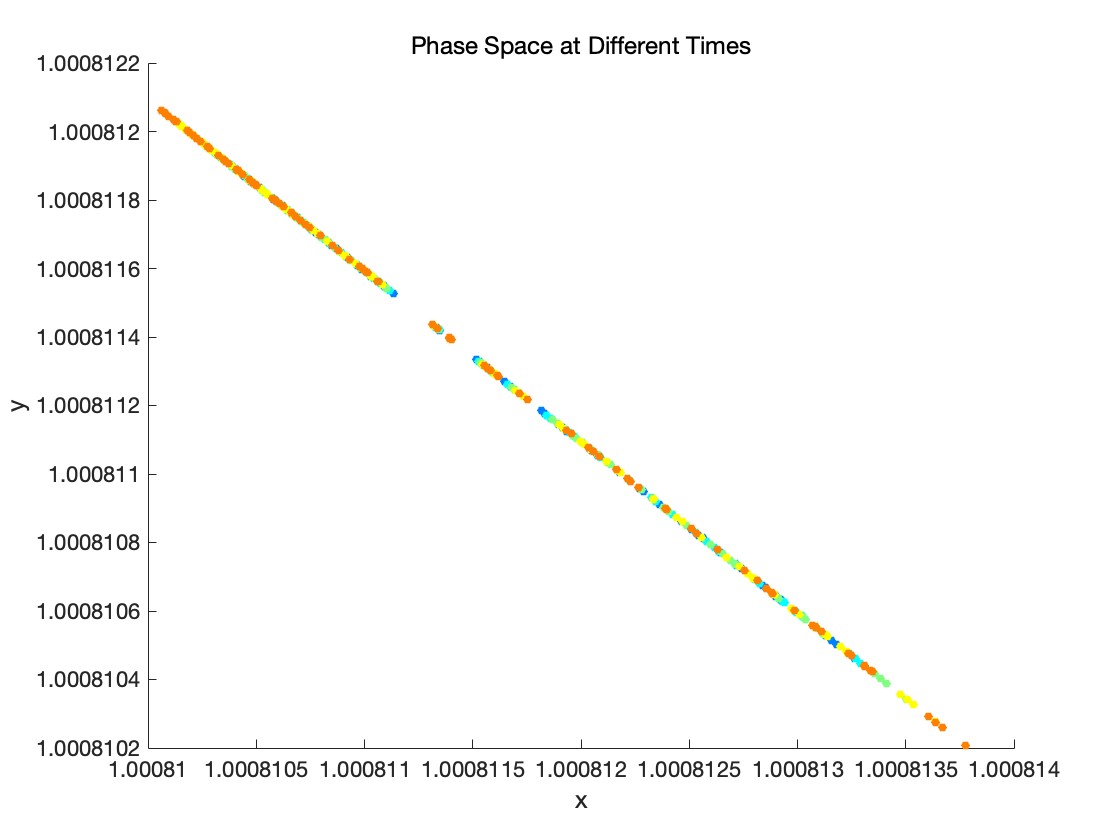}
    \caption{Phase diagram: the main plot on the left shows the stable point and limit cycle of the system when the bifurcation parameter $a=0.25$, and the sub-plot on the right is a local enlarged view of the intersection of the zero slope line, that is the stable point of the system. }
    \label{fig:joint_scatter}
\end{figure}

\section{Innovations}
This paper makes the following innovative contributions to the field of stochastic dynamical systems analysis and modeling:

\begin{enumerate}
    \item \textbf{First Systematic Exploration and Application of OM Functional in Slow-Fast Systems} \\
    Traditionally, the Onsager--Machlup (OM) functional has been extensively used in path sampling, rare event simulation, and optimal control problems for stochastic differential equations (SDEs) with a single time scale. However, in slow-fast systems---stochastic systems with significant time scale separation between variables---the direct embedding of the OM functional poses challenges due to path integral measures, drift-diffusion terms, and the rate functional’s multiscale decomposition. To date, there is little literature systematically applying the OM functional to the analysis and control of such slow-fast stochastic dynamics. This paper pioneers the application of the OM functional in slow-fast systems, establishing a modeling framework that reveals the multiscale geometric structure of slow-fast system path sampling and rare events.
    
    \item \textbf{Numerical Algorithms and Structured Analysis of the OM Functional under Multiscale Decomposition} \\
    For path sampling in slow-fast systems, this paper designs and implements a multiscale decomposition algorithm, splitting the OM functional into coupled fast and slow variable components. By leveraging numerical multiscale techniques (e.g., local averaging, projection operators), we effectively compute the optimal path or rare event probability distribution while preserving the geometric structure of the OM functional. This approach significantly reduces computational complexity in high-dimensional slow-fast systems.
    
    \item \textbf{Integration of Theory and Applications, Advancing Rare Event and Optimal Control Studies} \\
    This work not only proposes novel theoretical formulations of the OM functional under multiscale decomposition but also validates the methodology through concrete case studies---such as high-dimensional chemical reaction networks, molecular dynamics models, or slow-fast processes in biological systems. This innovative attempt provides new perspectives and tools for tackling rare event path sampling, optimal control, and uncertainty quantification in complex slow-fast systems.
\end{enumerate}

\section{Conclusion}\label{disscussion}
In this paper, we study the optimal control problem of McKean-Vlasov stochastic differential equation corresponding to the mean field limit equation of a class of randomly interacting particle systems. Based on the existing research foundation, the correspondence of the maximum possible migration orbit between the random interacting particle system and the McKean-Vlasov random dynamic system is derived from the sense of the Onsager Machlup action functional.

In the research process, we first introduce the stochastic interacting particle system and the mean field limit equation in detail, and consider the stochastic optimal control problem of McKean-Vlasov stochastic differential equation under the action of independent and equally distributed Brown noise. Secondly, we consider the high dimensional stochastic dynamical system under the action of additive Brown noise
Onsager-Machlup action functional, and Onsager-Machlup action functional for a special class of McKean-Vlasov stochastic differential equations. Based on the Pontryagin maximum principle, the expression of optimal control function is obtained from the perspective of optimal control and stochastic optimal control respectively. The main work of this paper is in Section 5.4, which deduces the existence and uniqueness theorem of the solution of stochastic optimal control problem of McKean-Vlasov stochastic differential equation, and further deduces that under the framework of optimal control problem, The correspondence between the optimal governing equation $F_N(\theta)$ for a stochastic interacting particle system and the solution of the optimal governing equation $F(\theta^{*})$ for a McKean-Vlasov stochastic dynamic system, Thus, we can indirectly explore the correspondence of the maximum possible migration orbit of the stochastic interacting particle system.

In this paper, the correspondence of the maximum possible transition pathway of the stochastic interacting particle system under the mean field is based on the mean field approximation theorem of the stochastic interacting particle system. Because of the different dimensions of the space where the maximum possible transition pathway is located, it is difficult to approach the transition pathway  directly. This paper takes into account the fact that the maximum possible transition pathway  is controlled by the control of the system (i.e. Brownian noise), because we indirectly prove the correspondence of the maximum possible transition pathway of a stochastic interacting particle system and its mean field limit equation in the sense of the optimal control solution. It lays a foundation for studying the maximum possible transition pathway of stochastic interacting particle systems.

We study the correspondence between the most probable transition pathways of a class of stochatic interacting particle systems and their mean field limit equations (McKean-Vlasov stochastic differential equations) under Brownian noise. The dimensionality of a stochatic interacting particle system is usually very high (particle number $N \rightarrow{\infty}$), and it is difficult to directly solve the maximum possible transition pathway of a particle system either from the variational principle or from the perspective of optimal control. The optimal control problem of the most probable transition pathways for the average field limit stochastic dynamical system is established, and the correspondence between the core equation (Hamiltonian maximum condition) in the Pontryagin maximum principle is obtained under the optimal control principle, which is helpful for the study of the most probable transition pathways' properties of the stochatic interacting particle system with high dimensions.

\section{Future Work}
In this paper, based on the Onsager Machlup functional theory, we prove the correspondence between the maximum possible migration orbit of the random interacting particle system and its mean field limit equation from the perspective of optimal control. However, there are still some shortcomings: (i) Since the optimal control problem is based on the Onagaser-Machlup action functional theory, the OnAgaser-Machlup action functional for McKean-Vlasov stochastic differential equation is limited to a special class of drift function $f$. The large deviation theory can perfectly avoid this limitation, so in the subsequent research, we can use the large deviation principle to extend this correspondence theorem to stochastic dynamical systems with a more general drift function $f$. (ii) In the study of this paper, we only provided theoretical derivation results, but did not design numerical experiments, so we could not intuitively see the correspondence between the maximum possible migration orbit of the randomly interacting particle system and its average field limit system. (iii) In the current research on random interacting particle systems, Gaussian Brown noise is the most commonly introduced noise. Due to the relatively good orbital properties, the effect will be small when studying the dynamical behavior of the system, and it may even bring about good structure. At present, however, random noise can be simulated by other random processes with discontinuous orbits, such as non-Gaussian L\'{e}vy processes and Poisson jump processes. Therefore, in future work, we will consider more complex noise types and explore the maximum possible migration orbit effect of discontinuous noise on randomly interacting particle systems.

\section*{Acknowledgements}

This work was partially supported by the Guangdong Provincial Key Laboratory of Mathematical and Neural Dynamical Systems (2024B1212010004) and the National Natural Science Foundation of China (No. 12141107).
The author would like to thank Dr. Zibo Wang for stimulating discussions and constructive suggestions of this work.


\bibliographystyle{unsrt}
\bibliography{references}
\end{document}